\documentclass[12pt]{amsart} \usepackage{amssymb,amsmath,amstext}
\usepackage[colorlinks=false, urlcolor=blue, linkcolor=blue,
citecolor=blue]{hyperref} 
\usepackage {tikz}
\usepackage {cite}
\usepackage{comment}
\usetikzlibrary{arrows,decorations.pathmorphing,backgrounds,positioning,fit,matrix}
\definecolor{processblue}{cmyk}{0.96,0,0,0} \usepackage{enumerate}
\input{xy} \xyoption{all} \usepackage{xypic}
\hypersetup{
  colorlinks   = true, 
  urlcolor     = blue, 
  linkcolor    = blue, 
  citecolor   = blue 
}
\setlength{\oddsidemargin}{0.0in} \setlength{\evensidemargin}{0.0in}
\setlength{\textwidth}{6.5in} \setlength{\parskip}{0.15cm}
\setlength{\parindent}{0.5cm}

\numberwithin{equation}{section}

\theoremstyle{plain} \newtheorem{theorem}{Theorem}[section]
\newtheorem{proposition}[theorem]{Proposition} \newtheorem{lemma}[theorem]{Lemma}
\newtheorem{corollary}[theorem]{Corollary}

\theoremstyle{definition} \newtheorem{definition}[theorem]{Definition}


\DeclareMathOperator{\Hess}{Hess}\DeclareMathOperator{\rk}{rk}
\DeclareMathOperator{\ann}{Ann}
\DeclareMathOperator{\Cat}{Cat}\DeclareMathOperator{\B}{\mathcal{B}}
\DeclareMathOperator{\K}{\sf k}
\setcounter{MaxMatrixCols}{20}

\begin{document}

\author{Nasrin Altafi} \address{Department of Mathematics, KTH Royal
  Institute of Technology, S-100 44 Stockholm, Sweden}
\email{nasrinar@kth.se}

\title[Hilbert functions of Artinian Gorenstein algebras with the SLP]{Hilbert functions of Artinian Gorenstein algebras with the strong Lefschetz property}
\keywords{Artinian Gorenstein algebra, Hilbert function, Hessians, Macaulay dual  generators, strong Lefschetz property, SI-sequence.}
\subjclass[2010]{13E10; 13D40; 13H10, 05E40}

\maketitle

\begin{abstract}
We prove that a sequence $h$ of non-negative integers is the Hilbert function of some Artinian Gorenstein algebra with the strong Lefschetz property if and only if it is an SI-sequence. This generalizes the result by T. Harima which characterizes the Hilbert functions of Artinian Gorenstein algebras with the weak Lefschetz property. We also provide classes of Artinian Gorenstein algebras obtained from the ideal of points in $\mathbb{P}^n$ such that some of their higher Hessians have non-vanishing determinants. Consequently, we provide families of such algebras satisfying the SLP.
\end{abstract}

\section{Introduction}
An Artinian graded algebra $A$ over a field $\K$ is said to satisfy the weak Lefschetz property (WLP for short) if there exists a linear form $\ell$ such that the multiplication map $\times \ell:A_i\rightarrow  A_{i+1}$ has maximal rank for every $i\geq 0$. An algebra $A$ is said to satisfy the strong Lefschetz property (SLP) if there is a linear form $\ell$ such that $\times \ell^j:A_i\rightarrow  A_{i+j}$ has maximal rank for each $i,j\geq 0$. Determining which graded Artinian algebras satisfy the Lefschetz properties has been of great interest (see for example \cite{HMNW, MMN2, Gondim, BK, Lefbook, BMMNZ, tour} and their references).
It is known that every Artinian algebra of codimension two in characteristic zero has the SLP, it was proven many times using different techniques, see for example \cite{HMNW} and \cite{Briancon}. This is no longer true for codimension three and higher and in general it is not easy to determine Artinian algebras satisfying or failing the WLP or SLP. Studying the Lefschetz properties of Artinian Gorenstein algebras is a very interesting problem. The $h$-vector of an Artinian algebra with the WLP is unimodal.  In general there are examples of Artinian Gorenstein algebras with non-unimodal $h$-vector and hence failing the WLP. R. Stanley \cite{Stanley} gave the first example with $h$-vector  $h=(1,13,12,13,1)$. Later D. Bernstein and A. Iarrobino \cite{BI} and M. Boij and D. Laksov \cite{BL} provided examples of non-unimodal Gorenstein $h$-vector with $h_1=5$.
Sequence $h=\left(h_0,h_1,\dots \right)$ is a Stanley-Iarrobino sequence, or briefly SI-sequence, if it is symmetric, unimodal and its first half, $(h_0,h_1,\dots , h_{\lfloor\frac{d}{2}\rfloor})$ is differentiable.
R. Stanley \cite{Stanley} showed that the Hilbert functions of Gorenstein sequences  are SI-sequences for $h_1\leq 3$. By the examples of non-unimodal Gorenstein Hilbert functions it is known that this is not necessarily true for $h_1\geq 5$.  Whether Hilbert functions of Artinian Gorenstein algebras with $h_1=4$ are SI-sequences is still open. It is known that any SI-sequence is a Gorenstein $h$-vector \cite{ChoIarrobino, MiglioreNagel}. T. Harima in \cite{Harima1995} gave a characterization on $h$-vectors of Artinian Gorenstein algebras satisfying the WLP. In this article we generalize this result and characterize $h$-vectors of Artinian Gorenstein algebras satisfying the SLP, see Theorem \ref{SI-SLP-Theorem}.\par
In section \ref{section4}, we consider classes of Artinian Gorenstein algebras which are quotients of coordinate rings of a set of $\K$-rational points in $\mathbb{P}_{\K}^n$. We prove that for a set $X$ of points in $\mathbb{P}_{\K}^n$ which lie on a rational normal curve any Artinian Gorenstein quotient of $A(X)$ satisfies the SLP, Theorem \ref{smoothconic}. 
Higher Hessians of dual generators of Artinian Gorenstein algebras were introduced by T. Maeno and J. Watanabe \cite{MW}. We study the higher Hessians of dual generators of Artinian Gorenstein quotients of $A(X)$. We show  Artinian Gorenstein quotients of $A(X)$ where  $X\subset \mathbb{P}_{\K}^2$ lie on a conic satisfy the SLP, Theorem \ref{singularConic}. 
We also prove non-vanishing of the determinants of certain higher Hessians  in  Theorems \ref{points-on-conic} and  \ref{points-on-line}  for Artinian Gorenstein quotients of coordinate ring of points $X\subset \mathbb{P}_{\K}^2$ where $X$ contains points on a conic and a line respectively. We then in Corollary \ref{corSLP} provide classes of such Artinian algebras satisfying SLP.
\section{Preliminaries}

Let $S={\sf k}[x_0,\dots ,x_n]$ be a polynomial ring equipped with the standard grading over a field $\sf k$ of characteristic zero and $\mathbb{P}^{n}=\mathbb{P}^{n}_{\sf k}=\mathrm{Proj} S$.  Let $A = S/I$ be a graded Artinian (it has Krull dimension zero) algebra where $I$ is a homogeneous ideal. The \emph{Hilbert function} of $A$ in degree $i$ is $h_{A}(i) = h_i =\dim_{\sf k}(A_i)$. Since $A$ is Artinian the Hilbert function of $A$ is determined by its $h$-vector, $h=\left(h_0,h_1,h_2,\dots ,h_d\right)$ such that $h_d\neq 0$. The integer $d$ is called the \emph{socle degree}. The graded ${\sf k}$-algebra is \emph{Gorenstein} if it has a one dimensional socle. Without loss of generality we may assume that $I$ does not contain a linear form (form of degree $1$) so $h_1=n+1$ and is called the codimension of $A$. If $A$ is Gorenstein then the $h$-vector is symmetric and so $h_d=1$. A sequence $h=\left(h_0,\dots ,h_d\right)$ is called a \emph{Gorenstein sequence} if $h$ is the Hilbert function of some Artinian Gorenstein algebra. \par
\noindent Let $h$ and $i$ be positive integers. Then $h$ can be written uniquely in the following form 
\begin{equation}
h=\binom{m_i}{i}+\binom{m_{i-1}}{i-1}+\cdots +\binom{m_j}{j},
\end{equation}
where $m_i > m_{i-1}>\cdots > m_j\geq j\geq 1$. This expression for $h$ is called the $i$-binomial expansion of $h$. Also define 
\begin{equation}
h^{\langle i\rangle}=\binom{m_i+1}{i+1}+\binom{m_{i-1}+1}{i}+\cdots +\binom{m_j+1}{j+1}
\end{equation}
where we set $0^{\langle i\rangle}:=0$.

A sequence of non-negative integers $h=\left(h_0,h_1,\dots \right)$ is called an \emph{O-sequence} if $h_0=1$ and $h_{i+1}\leq h_i^{\langle i\rangle}$ for all $i\geq 1$. Such sequences are the ones which exactly occur as Hilbert functions of standard graded algebras.
\begin{theorem}[Macaulay\cite{Macaulay}]
The sequence $h = \left(h_0,h_1,\dots , h_d\right)$ is an O-sequence if and only if it is the $h$-vector of some standard graded Artinian algebra. 
\end{theorem}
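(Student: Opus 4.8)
The plan is to reduce both directions to statements about monomial ideals and to compare an arbitrary such ideal with the extremal lexicographic one, whose Hilbert function grows exactly according to the operation $(-)^{\langle i\rangle}$. Throughout I write $S_1$ for the space of linear forms and, for a set $N$ of monomials, $S_1\cdot N$ for its shadow $\{x_j m : 1\le j\le r,\ m\in N\}$.

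For the implication that every O-sequence is realized, let $h$ be an O-sequence with $h_1=r$ and work in $S={\sf k}[x_1,\dots,x_r]$. In each degree $i$ let $\mathcal{L}_i$ be the set of the $h_i$ smallest monomials of degree $i$ in the lexicographic order, and let $N_i$ be the complementary set of largest monomials. I claim the ideal $I$ generated by $\bigcup_i N_i$ is a monomial ideal with $\dim_{\sf k}(S/I)_i=h_i$, which then exhibits $h$ as a Hilbert function. The combinatorial input is that the shadow $S_1\cdot N_i$ of a top lexicographic segment is again a top lexicographic segment, of size $\dim_{\sf k}S_{i+1}-h_i^{\langle i\rangle}$; since $h_{i+1}\le h_i^{\langle i\rangle}$, the top segment $N_{i+1}$ contains at least this many monomials, so $S_1\cdot N_i\subseteq N_{i+1}$ and $\bigcup_i N_i$ is closed under multiplication. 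By construction the surviving monomials in degree $i$ are exactly $\mathcal{L}_i$, so $S/I$ is a standard graded Artinian algebra with Hilbert function $h$.

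For the converse, let $h$ be the Hilbert function of a standard graded Artinian algebra $A=S/I$. Passing to the initial ideal $\mathrm{in}(I)$ with respect to, say, the degree-lexicographic order does not change the Hilbert function, since the standard monomials form a ${\sf k}$-basis of $S/I$ and of $S/\mathrm{in}(I)$ in every degree; hence I may assume $I$ is a monomial ideal. The survivors $B=\{m:m\notin I\}$ then form an order ideal, i.e. a set of monomials closed under passing to divisors, with $|B_i|=h_i$. Because $I$ is an ideal, every degree-$(i+1)$ survivor $n$ has all of its one-step divisors $n/x_j$ in $B_i$; this, together with the order-ideal property, forces $|B_{i+1}|$ to be at most the degree-$(i+1)$ size of the lexicographic order ideal whose degree-$i$ part has size $h_i$, namely $h_i^{\langle i\rangle}$. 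Thus $h_{i+1}\le h_i^{\langle i\rangle}$ and $h$ is an O-sequence.

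The crux in both directions is one combinatorial estimate: over all order ideals with prescribed degree-$i$ size $h_i$, the lexicographic one maximizes the degree-$(i+1)$ size, and that maximum equals $h_i^{\langle i\rangle}$; dually, over all sets of monomials of a given size, the top lexicographic segment minimizes the shadow. I expect this to be the hardest step, and would establish it by a compression (combinatorial shifting) argument: replacing a monomial set variable-by-variable with its compressed image does not enlarge the shadow and drives the set toward a lexicographic segment, after which a direct count using the $i$-binomial expansion identifies the extremal value as $h_i^{\langle i\rangle}$. The auxiliary facts — that the shadow of a lexicographic segment is again a lexicographic segment, and the bookkeeping of binomial coefficients under $(-)^{\langle i\rangle}$ — are then routine.
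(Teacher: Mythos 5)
The paper offers no proof of this statement: it is quoted as Macaulay's classical theorem with a citation to \cite{Macaulay}, so there is no in-paper argument to compare against. Your outline is the standard proof of Macaulay's theorem and its overall structure is sound. For sufficiency you build the lex-segment ideal and use the fact that the shadow of a top lexicographic segment is again a top lexicographic segment of the predicted size, which correctly yields $S_1\cdot N_i\subseteq N_{i+1}$ and hence an ideal with Hilbert function $h$; for necessity you pass to an initial ideal (a flat degeneration preserving the Hilbert function) and invoke the extremality of lex order ideals. Both reductions are correct, and you rightly identify that essentially all of the difficulty is concentrated in the single combinatorial lemma that lexicographic segments minimize shadows (equivalently, that lex order ideals maximize growth, with extremal value $h_i^{\langle i\rangle}$). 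As written, that lemma is only gestured at via compression, so your text is an outline rather than a complete proof; but compression (in the style of Clements--Lindstr\"om, or Green's linear-algebraic variant) is a standard and workable route to it, and the bookkeeping with the $i$-binomial expansion that identifies the extremal value is routine once the compression step is in place. One small point to watch: be consistent about which monomial order and which end of it you call a ``segment,'' since the shadow statement holds for the top segment and the complementary order ideal is the bottom one; your write-up handles this correctly but it is a common source of off-by-convention errors.
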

We say $h=\left(h_0,h_1,\dots \right)$ is  \emph{differentiable} if its first difference $\Delta h = \left(h_0,h_1-h_0,\dots \right)$ is an O-sequence. Moreover, an $h$-vector is called \emph{unimodal} if $h_0\leq h_1\leq\cdots\leq  h_i \geq  h_{i+1}\geq \cdots \geq h_d$. A sequence $h=\left(h_0,h_1,\dots \right)$ is \emph{Stanley-Iarrobino sequence}, or briefly \emph{SI-sequence}, if it is symmetric, unimodal and its first half, $(h_0,h_1,\dots , h_{\lfloor\frac{d}{2}\rfloor})$ is differentiable.\par  

Now we recall the theory of \emph{Macaulay Inverse systems}. Define Macualay dual ring $R= {\sf k}[X_0,\dots ,X_n]$ to $S$ where the action of $x_i$ on $R$, which is denoted by $\circ$, is partial differentiation with respect to $X_i$.
For a homogeneous ideal $I\subseteq S$ define its \emph{inverse system} to be the graded $S$-module $M\subseteq R$ such that $I=\ann_S(M)$. 
There is a one-to-one correspondence between graded Artinian algebras $S/I$ and finitely generated graded $S$-submodules $M$ of $R$, where $I=\ann_S(M)$ is the annihilator  of $M$ in $S$, conversely, $M=I^{-1}$ is the $S$-submodule of $R$ which is annihilated by  $I$. Moreover, the Hilbert functions of $S/I$ and $M$ are the same, in fact $\dim_{\sf k}(S/I)_i=\dim_{\sf k}M_i$ for all $i\geq 0$. See \cite{Geramita} and \cite{IK} for more more details. 

By a  result by F.H.S. Macaulay \cite{F.H.S} it is known that an Artinian standard graded $\mathsf{k}$-algebra $A=S/I$ is Gorenstein if and only if there exists $F\in R_d$, such that $I=\ann_S(F)$. The homogeneous polynomial $F\in R_d$ is called the \emph{Macaulay dual generator} of $A$.
\begin{definition}\cite[Definition 3.1]{MW}
Let $F$ be a polynomial in $R$ and $A= S/\ann_S(F)$ be its associated Artinian Gorenstein algebra. Let $\mathcal{B}_{j} = \lbrace \alpha^{(j)}_i+\ann_S(F) \rbrace_i$ be a  $\mathsf{k}$-basis of $A_j$. The entries of the  $j$-th Hessian matrix of $F$ with respect to $\mathcal{B}_j$ are given  by
$$
(\Hess^j(F))_{u,v}=(\alpha^{(j)}_u\alpha^{(j)}_v \circ F).
$$
We note that when $j=1$ the form $\Hess^1(F)$ coincides with the usual Hessian. Up to  a non-zero constant multiple  $\det \Hess^j(F)$ is independent of the basis $\mathcal{B}_j$.  By abuse of notation we will write   $\mathcal{B}_{j} = \lbrace \alpha^{(j)}_i \rbrace_i$ for a basis of $A_j$. For a linear form $\ell=a_0x_0+\cdots +a_nx_n$ we denote by $\Hess^j_\ell(F)$ the Hessian evaluated at the point $P$ dual to $\ell$ that is $P=(a_0,\dots ,a_n)$.
\end{definition}
The following result by T. Maeno and J. Watanabe provides a criterion for Artinian Gorenstein algebras satisfying the SLP.
\begin{theorem}\cite[Theorem 3.1]{MW}
Let $A = S/\ann_S(F)$ be an Artinian Gorenstein quotient of $S$ with socle degree $d$. Let $\ell$ be a linear form and consider the multiplication map $\times \ell^{d-2j} :A_j\longrightarrow A_{d-j}$. Pick  any bases $\mathcal{B}_j$ for $A_j$ for $j=0,\dots , \lfloor\frac{d}{2}\rfloor$. Then linear form $\ell$ is a strong Lefschetz element for $A$ if and only if 
$$
\det\Hess^j_\ell(F)\neq 0,
$$
for every $j=0,\dots , \lfloor\frac{d}{2}\rfloor$.
\end{theorem}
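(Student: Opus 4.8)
The plan is to prove both directions by passing through an intermediate, purely multiplicative reformulation: $\ell$ is a strong Lefschetz element if and only if each \emph{central} map $\times\ell^{d-2j}\colon A_j\to A_{d-j}$ is bijective for $j=0,\dots,\lfloor d/2\rfloor$, and then to identify $\Hess^j_\ell(F)$ with the Gram matrix of the bilinear form that this central map induces via the Gorenstein pairing. The starting observation is that since $A$ is Gorenstein of socle degree $d$, Macaulay duality makes the multiplication pairing $A_j\times A_{d-j}\to A_d\cong{\sf k}$, $(\alpha,\beta)\mapsto(\alpha\beta)\circ F$, perfect, and in particular $\dim_{\sf k}A_j=\dim_{\sf k}A_{d-j}$; hence a central map has maximal rank exactly when it is an isomorphism. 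The perfectness itself follows from $\ann_S(F)=I$: if $\alpha\in A_j$ pairs to zero with all of $A_{d-j}$ then $\alpha\circ F\in R_{d-j}$ is killed by all of $S_{d-j}$, so $\alpha\circ F=0$ and $\alpha=0$ in $A_j$.

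The heart of the argument is the identification of the Hessian. For the point $P=(a_0,\dots,a_n)$ dual to $\ell=a_0x_0+\cdots+a_nx_n$ and any $G\in R_e$ one has the apolarity identity $\ell^e\circ G=e!\,G(P)$, which I would check by expanding $\ell^e$ via the multinomial theorem and noting that $x^{\mathbf c}\circ X^{\mathbf d}$ survives only for $\mathbf c=\mathbf d$. Applying this with $e=d-2j$ and $G=\alpha^{(j)}_u\alpha^{(j)}_v\circ F\in R_{d-2j}$ gives
$$(\Hess^j_\ell(F))_{u,v}=(\alpha^{(j)}_u\alpha^{(j)}_v\circ F)(P)=\tfrac{1}{(d-2j)!}\bigl(\ell^{d-2j}\alpha^{(j)}_u\alpha^{(j)}_v\bigr)\circ F.$$
Thus, up to the nonzero scalar $1/(d-2j)!$, $\Hess^j_\ell(F)$ is the Gram matrix in the basis $\mathcal{B}_j$ of the symmetric bilinear form $B_j(\alpha,\beta)=(\alpha\,\ell^{d-2j}\beta)\circ F$ on $A_j$. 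By construction $B_j$ is the composite of $\times\ell^{d-2j}\colon A_j\to A_{d-j}$ with the perfect Gorenstein pairing, so $B_j$ is nondegenerate precisely when $\times\ell^{d-2j}$ is injective, i.e.\ (by equal dimensions) an isomorphism. This establishes $\det\Hess^j_\ell(F)\neq0$ if and only if $\times\ell^{d-2j}$ is bijective.

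It then remains to show that bijectivity of all central maps is equivalent to the full SLP. One direction is immediate: a strong Lefschetz element makes every $\times\ell^{d-2j}$ of maximal rank, hence bijective. For the converse I would assume every central map is an isomorphism and fix an arbitrary $\times\ell^k\colon A_i\to A_{i+k}$. If $2i+k\le d$ I prove injectivity: $\ell^kz=0$ forces $\ell^{d-2i}z=\ell^{d-2i-k}(\ell^kz)=0$, and injectivity of the central map $\times\ell^{d-2i}$ gives $z=0$. If $2i+k\ge d$ I prove surjectivity by Gorenstein duality: under the perfect pairings the adjoint of $\times\ell^k\colon A_i\to A_{i+k}$ is $\times\ell^k\colon A_{d-i-k}\to A_{d-i}$, whose source index $i'=d-i-k$ satisfies $2i'+k\le d$, so by the previous case it is injective and its adjoint is therefore surjective. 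Since every $\times\ell^k$ is thus injective or surjective it has maximal rank, which is the SLP. Combining this reduction with the Hessian identification yields both implications of the theorem.

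The step I expect to be most delicate is the Hessian identification in the second paragraph: one must correctly track the passage from ``evaluate the polynomial entries at $P$'' to ``apply $\ell^{d-2j}$,'' namely the identity $G(P)=\tfrac{1}{(d-2j)!}\,\ell^{d-2j}\circ G$, and then recognize the resulting bilinear form as the central multiplication composed with the Gorenstein pairing; keeping the degrees and the contraction action $\circ$ consistent is where care is required. The reduction of the third paragraph is standard Lefschetz bookkeeping, the only subtlety being the systematic use of the duality $A_m\cong A_{d-m}^{\,*}$ to convert the surjectivity statements into injectivity statements already handled.
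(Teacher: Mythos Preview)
The paper does not prove this theorem; it is quoted from Maeno and Watanabe \cite{MW} as a background result and is stated without proof. There is therefore no ``paper's own proof'' to compare against.

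That said, your argument is correct and is essentially the standard proof of the Maeno--Watanabe criterion. The three ingredients you isolate are exactly the right ones: (i) perfection of the Gorenstein pairing $A_j\times A_{d-j}\to\mathsf{k}$, (ii) the apolarity identity $\ell^{e}\circ G=e!\,G(P)$ identifying $\Hess^j_\ell(F)$, up to the nonzero scalar $(d-2j)!^{-1}$, with the Gram matrix of $(\alpha,\beta)\mapsto(\alpha\,\ell^{d-2j}\beta)\circ F$, and (iii) the reduction of the full SLP to bijectivity of the central maps $\times\ell^{d-2j}\colon A_j\to A_{d-j}$. Your verification of (ii) via the multinomial expansion is fine, and your handling of (iii)---injectivity from the central map when $2i+k\le d$, surjectivity by passing to the adjoint when $2i+k\ge d$---is the usual Lefschetz bookkeeping. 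Nothing is missing.
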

\begin{definition}
 Let $A= S/\ann(F)$ where $F\in R_d$. Pick bases $\B_j = \lbrace \alpha^{(j)}_u\rbrace_u$ and $\B_{d-j} = \lbrace \beta^{(d-j)}_u\rbrace_u$ be $\sf k$-bases of $A_j$ and $A_{d-j}$ respectively. The entries of the  \emph{catalecticant matrix of $F$} with respect to $\B_j$ and $\B_{d-j}$ are given by 
$$
(\Cat^j_F)_{uv}=(\alpha^{(j)}_u\beta^{(d-j)}_v\circ F).
$$
\end{definition}
Up to a non-zero constant multiple $\det \Cat^j_F$ is independent of the basis $\mathcal{B}_j$. The rank of the $j$-th catalecticant matrix of $F$ is equal to the Hilbert function of $A$ in degree $j$, see \cite[Definition 1.11]{IK}.

Throughout this paper we denote by  $X=\{P_1,\dots ,P_s\}$ a set of $s$ distinct  points in $\mathbb{P}^n$. Denote the coordinate ring of $X$ by $A(X)=S/I(X)$, where $I(X)$ is a homogeneous ideal of forms vanishing on $X$. For each point $P\in X$ we consider it as a point in the affine space $\mathbb{A}^{n+1}$ and denote it by $P=(a_0 : \dots  : a_n)$ and the linear form in $R$ dual to $P$ is $L=a_0X_0+\cdots +a_nX_n$. We set 
\begin{equation}
\tau(X):=\min \{i\mid h_{A(X)}(i)=s\}.
\end{equation}
A. Iarrobino and V. Kanev \cite{IK} proved that any Artinian Gorenstein quotient of $A(X)$ by a general enough  hyperplane  of degree $d\geq \tau(X)$ has the Hilbert function that is equal to $h_{A(X)}$ in degrees $0\leq j\leq \lfloor\frac{d}{2}\rfloor$.

M. Boij in \cite{Boij} provides the special form of the dual generator for an Artinian Gorenstein quotients of $A(X)$.
\begin{proposition}\cite[Proposition 2.3]{Boij}\label{dualForm}
Let $A$ be any Artinian Gorenstein quotient of $A(X)$ with socle degree $d$ such that $d\geq \tau(X)$ and dual generator $F$. Then $F$ can be written as 
$$
F= \sum^s_{i=1}\alpha_iL_i^d
$$
where $\alpha_1,\dots ,\alpha_s\in \K$, are not all zero.
\end{proposition}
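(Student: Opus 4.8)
The plan is to realize the statement as an instance of the Macaulay inverse system (apolarity) correspondence for powers of linear forms, so that the whole argument reduces to identifying the degree-$d$ part of the inverse system $I(X)^{-1}$ with the $\K$-span of the $L_i^d$. First I would record the basic apolarity identity: for $L=a_0X_0+\cdots+a_nX_n$ dual to $P=(a_0:\cdots:a_n)$ and any $f\in S_k$ with $k\le d$,
$$
f\circ L^{d}=\frac{d!}{(d-k)!}\,f(P)\,L^{d-k},
$$
while $f\circ L^{d}=0$ when $k>d$. This is verified on monomials (each application of $\partial_{X_j}$ brings down the appropriate coordinate of $P$) and extended by linearity; the characteristic-zero hypothesis guarantees that the scalar $d!/(d-k)!$ is nonzero.

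Next I would use this identity in two complementary ways. On the one hand, if $f\in I(X)$ then $f(P_i)=0$ for every $i$, so $f\circ L_i^{d}=0$; hence each $L_i^{d}$ lies in the inverse system $M:=I(X)^{-1}$, and therefore $V:=\langle L_1^{d},\dots,L_s^{d}\rangle_{\K}\subseteq M_d$. Because $A$ is a Gorenstein quotient of $A(X)$ with dual generator $F$, we have $I(X)\subseteq\ann_S(F)$, which is precisely the assertion $F\in M_d$. Thus it suffices to prove the reverse inclusion $M_d\subseteq V$, which I would obtain by a dimension count.

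On the other hand, the pairing side of the identity computes $\dim_{\K}V$. For $(c_1,\dots,c_s)\in\K^{s}$ one has $\sum_i c_iL_i^{d}=0$ if and only if $f\circ\bigl(\sum_i c_iL_i^{d}\bigr)=0$ for all $f\in S_d$ (the apolarity pairing $S_d\times R_d\to\K$ being perfect), i.e. $\sum_i c_i f(P_i)=0$ for all $f\in S_d$. The space of such relations is the orthogonal complement of the image of the evaluation map $\phi_d\colon S_d\to\K^{s}$, $f\mapsto(f(P_1),\dots,f(P_s))$, and that image is $S_d/I(X)_d\cong A(X)_d$ of dimension $h_{A(X)}(d)$. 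Hence $\dim_{\K}V=s-(s-h_{A(X)}(d))=h_{A(X)}(d)$. Since $\dim_{\K}M_d=\dim_{\K}A(X)_d=h_{A(X)}(d)$ by the inverse system correspondence, the inclusion $V\subseteq M_d$ is an equality, so $F\in V$; that is, $F=\sum_{i=1}^{s}\alpha_iL_i^{d}$ for some $\alpha_i\in\K$. Finally $F\neq0$, being the dual generator of a nonzero algebra of socle degree $d$, so the $\alpha_i$ are not all zero.

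I expect the only genuinely substantive step to be the dimension equality $\dim_{\K}V=h_{A(X)}(d)$; the inclusion $V\subseteq M_d$ and the differentiation identity are routine. I would also remark that the hypothesis $d\ge\tau(X)$ forces $h_{A(X)}(d)=s$, so that $\phi_d$ is surjective and the forms $L_1^{d},\dots,L_s^{d}$ are in fact linearly independent; they then constitute a $\K$-basis of $M_d$ and the coefficients $\alpha_i$ are uniquely determined, although for the bare existence statement this refinement is not needed.
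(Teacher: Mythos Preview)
The paper does not supply its own proof of this proposition; it is quoted verbatim from \cite[Proposition~2.3]{Boij} and stated without argument. Your proof is correct and is essentially the standard apolarity argument (often called the Apolarity Lemma, cf.\ \cite{IK}): the differentiation identity $f\circ L^{d}=\tfrac{d!}{(d-k)!}f(P)L^{d-k}$ gives both the inclusion $\langle L_1^{d},\dots,L_s^{d}\rangle\subseteq (I(X)^{-1})_d$ and, via the evaluation map, the dimension equality that forces this inclusion to be an equality. Your closing observation is also accurate: the hypothesis $d\ge\tau(X)$ is used only to make the $L_i^{d}$ linearly independent (hence the $\alpha_i$ unique), not for the bare existence of such a representation.
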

\begin{proposition}\cite[Proposition 2.4]{Boij}\label{h-vector}
Assume that $d\geq 2\tau(X)-1$ and $F =\sum^s_{i=1}\alpha_iL_i^d$ where $\alpha_i\neq 0$ for all $i$. Let $A$ be the Artinian Gorenstein quotient of $A(X)$ with dual generator $F$. Then the Hilbert function of $A$ is given by 
$$
h_A(i)=\begin{cases}
&h_{A(X)}(i)  \quad\quad\quad\hspace*{0.25cm} 0\leq i\leq \lfloor \frac{d}{2}\rfloor,\\
&h_{A(X)}(d-i)\quad \quad \lceil\frac{d}{2}\rceil\leq i\leq d.
\end{cases}
$$
\end{proposition}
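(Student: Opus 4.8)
The plan is to compute the Hilbert function directly from the inverse system and then invoke the symmetry of Gorenstein $h$-vectors. Since $A=S/\ann_S(F)$ is Artinian Gorenstein of socle degree $d$, its $h$-vector is symmetric, $h_A(i)=h_A(d-i)$; hence it suffices to establish $h_A(j)=h_{A(X)}(j)$ for $0\le j\le\lfloor d/2\rfloor$, because the second branch then follows from $h_A(i)=h_A(d-i)=h_{A(X)}(d-i)$ for $\lceil d/2\rceil\le i\le d$. By the inverse system correspondence recalled in the preliminaries, the inverse system of $A$ is the cyclic module $S\circ F$ (it has annihilator $\ann_S(F)$), so $h_A(j)=\dim_{\sf k}(S\circ F)_j=\dim_{\sf k}(S_{d-j}\circ F)$, and everything reduces to understanding the space $S_{d-j}\circ F$.

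First I would record the apolarity identity: for a form $g\in S_{d-j}$ and a linear form $L_i$ dual to $P_i=(a_0:\dots:a_n)$, differentiation gives $g\circ L_i^d=\tfrac{d!}{j!}\,g(P_i)\,L_i^j$, a scalar multiple of $L_i^j$ whose scalar is proportional to the value $g(P_i)$. Applying this to $F=\sum_i\alpha_i L_i^d$ yields, up to the common constant $d!/j!$,
$$
S_{d-j}\circ F=\Big\{\textstyle\sum_i\alpha_i\,g(P_i)\,L_i^j : g\in S_{d-j}\Big\}.
$$
In other words, $(S\circ F)_j$ is the image, under the linear map $\Phi:(c_1,\dots,c_s)\mapsto\sum_i\alpha_i c_i L_i^j$, of the subspace $\mathrm{Im}(\mathrm{ev}_{d-j})=\{(g(P_1),\dots,g(P_s)):g\in S_{d-j}\}\subseteq{\sf k}^s$, where $\mathrm{ev}_{d-j}\colon S_{d-j}\to{\sf k}^s$ is evaluation at the points. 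Note $\mathrm{rank}(\mathrm{ev}_{d-j})=h_{A(X)}(d-j)$, since its kernel is $I(X)_{d-j}$.

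The crux is the numerical input coming from $d\ge 2\tau(X)-1$: for every $j$ with $0\le j\le\lfloor d/2\rfloor$ one has $d-j\ge\lceil d/2\rceil\ge\tau(X)$, so by the definition of $\tau(X)$ we get $h_{A(X)}(d-j)=s$, i.e. $\mathrm{ev}_{d-j}$ is surjective. Hence the tuples $(g(P_1),\dots,g(P_s))$ range over all of ${\sf k}^s$, and since every $\alpha_i$ is nonzero the rescaling $(c_i)\mapsto(\alpha_i c_i)$ is a bijection of ${\sf k}^s$; therefore $S_{d-j}\circ F=\langle L_1^j,\dots,L_s^j\rangle$. Finally, the standard fact that $\langle L_1^j,\dots,L_s^j\rangle=(I(X)^{-1})_j$ — which follows from the same apolarity pairing $f\circ L_i^j=j!\,f(P_i)$, identifying this span as the dual of $\mathrm{ev}_j$ — gives $\dim_{\sf k}\langle L_1^j,\dots,L_s^j\rangle=\mathrm{rank}(\mathrm{ev}_j)=h_{A(X)}(j)$, completing the first half.

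The main obstacle, really the only substantive point, is isolating exactly where the two hypotheses enter: the inequality $d\ge 2\tau(X)-1$ is precisely what forces $d-j\ge\tau(X)$ throughout the first half and hence the surjectivity of the top evaluation map, while the condition $\alpha_i\ne 0$ for all $i$ is exactly what guarantees the diagonal rescaling collapses no coordinate, so that the full span $\langle L_1^j,\dots,L_s^j\rangle$ is recovered rather than a proper subspace. Equivalently, one can run the whole argument through the catalecticant factorization $\Cat^j_F=V_j^{\,t}\,\mathrm{diag}(\alpha_1,\dots,\alpha_s)\,V_{d-j}$, with $V_k$ the degree-$k$ evaluation (Veronese) matrix, whose rank equals $h_A(j)$; surjectivity of $V_{d-j}$ (rank $s$) together with the invertibility of $\mathrm{diag}(\alpha_i)$ forces $\mathrm{rank}\,\Cat^j_F=\mathrm{rank}\,V_j=h_{A(X)}(j)$, yielding the same conclusion.
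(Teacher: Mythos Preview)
The paper does not actually prove this proposition; it is quoted verbatim from Boij \cite[Proposition 2.4]{Boij} as background, so there is no in-paper argument to compare against. That said, your proof is correct and is essentially the standard one: the identity $g\circ L_i^d=\tfrac{d!}{j!}g(P_i)L_i^j$ for $g\in S_{d-j}$, surjectivity of evaluation in degree $d-j\ge\tau(X)$, invertibility of the diagonal scaling by the $\alpha_i$, and the identification $\dim_{\sf k}\langle L_1^j,\dots,L_s^j\rangle=h_{A(X)}(j)$ via the apolarity pairing combine exactly as you describe, and Gorenstein symmetry handles the second half. Your alternative phrasing through the catalecticant factorization $\Cat^j_F=V_j^{\,t}\operatorname{diag}(\alpha_i)V_{d-j}$ is the same computation in matrix form and is likewise sound.
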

The following well known result  guarantees the existence of a set of points $X\subseteq\mathbb{P}^n$ with given Hilbert function under an assumption on the Hilbert function.

\begin{theorem}\cite[Theorem 4.1]{GMR} \label{diffOseqthm}
Let $h=(h_0,h_1,\dots )$ be a sequence of non-negative integers. Then
 there is a reduced $\sf k$-algebra with Hilbert function $h$ if and only if $h$ is a differentiable 
 O-sequence.
\end{theorem}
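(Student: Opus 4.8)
The plan is to prove both implications, treating necessity via a general hyperplane section and sufficiency via a lifting of a monomial ideal to a radical one.

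For necessity, suppose $A=S/I$ is a reduced standard graded $\K$-algebra with Hilbert function $h$. Since $A$ is standard graded, Macaulay's theorem already gives that $h$ is an O-sequence, so it remains to show that $\Delta h$ is an O-sequence. If $\dim A=0$, then reducedness forces $A=\K$ and $h=(1)$, which is trivially differentiable, so assume $\dim A\geq 1$. Because $\K$ has characteristic zero it is infinite and perfect, so $A$ stays reduced after base change and a general linear form $\ell$ may be chosen. As $A$ is reduced, $\operatorname{Ass}(A)=\operatorname{Min}(A)$, and since each minimal prime meets the space of linear forms in a proper subspace, a general $\ell$ avoids all associated primes and is therefore a non-zerodivisor. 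The exact sequence
\[
0\longrightarrow A(-1)\xrightarrow{\;\times\ell\;} A\longrightarrow A/\ell A\longrightarrow 0
\]
shows that $A/\ell A$ is standard graded with Hilbert function $\Delta h$. Applying Macaulay's theorem once more to $A/\ell A$ yields that $\Delta h$ is an O-sequence, so $h$ is a differentiable O-sequence.

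For sufficiency, let $h$ be a differentiable O-sequence and set $g=\Delta h$, which is an O-sequence. By Macaulay's theorem $g$ is the Hilbert function of a standard graded algebra, and I would realize it by a lex-segment (hence monomial) ideal $L\subseteq T=\K[x_1,\dots ,x_r]$ with $h_{T/L}=g$. Passing to the polynomial extension $T[x_0]$ only proves that the partial sums $h$ form an O-sequence but does not produce a reduced algebra, since $T/L$ is typically non-reduced; to repair this I would lift $L$ by distraction. Concretely, in $S=\K[x_0,x_1,\dots ,x_r]$ replace in each minimal monomial generator every power $x_i^{a}$ by the product of distinct linear forms $\prod_{t=0}^{a-1}(x_i-t\,x_0)$, obtaining an ideal $I$. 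The claim is that $I$ is radical, that $x_0$ is a non-zerodivisor on $S/I$, and that $I+(x_0)=L+(x_0)$ (note each factor reduces to $x_i$ modulo $x_0$), so that the general hyperplane section of $S/I$ recovers $T/L$. Granting this, regularity of $x_0$ gives $h_{S/I}(i)-h_{S/I}(i-1)=g_i$, whence $h_{S/I}=h$, while radicality of $I$ makes $S/I$ reduced, producing the desired algebra.

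The main obstacle is the distraction step: proving that the distracted ideal $I$ is genuinely radical and that $x_0$ is regular with the stated hyperplane section. This is the geometric heart of the argument, where one checks that the associated primes of $I$ are all generated by linear forms --- so that $V(I)$ is a reduced union of linear subspaces (a reduced set of points in the zero-dimensional case) --- and that distraction is flat over the $x_0$-line, guaranteeing the Hilbert function is the partial sum of $g$. Alternatively, one may invoke the Geramita--Gregory--Roberts lifting theorem, or Hartshorne's result on lifting monomial ideals to reduced unions of linear varieties, as a black box, after which the Hilbert-function bookkeeping is routine.
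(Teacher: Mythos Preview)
The paper does not supply its own proof of this theorem; it is stated as a known result and attributed to Geramita--Maroscia--Roberts \cite{GMR}. There is therefore nothing in the paper to compare your argument against.

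That said, your sketch is precisely the standard argument from the cited reference: necessity via a general hyperplane section of a reduced algebra (using that associated primes of a reduced ring are minimal, so a general linear form is regular), and sufficiency via distraction of a lex-segment monomial ideal realizing $\Delta h$. You correctly identify the only nontrivial point --- that the distracted ideal is radical with $x_0$ a non-zerodivisor and with special fibre $L+(x_0)$ --- and your suggestion to invoke the GMR lifting theorem directly is exactly what the paper does by citing the result wholesale. The outline is sound; nothing is missing beyond the technical verification you already flag.
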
 
The following theorem due to E. D. Davis \cite{Davis} provides information about the geometric properties of a set of points $X\subseteq \mathbb{P}^2$ given the Hilbert function $h_{A(X)}$.
\begin{theorem}\cite{Davis}\label{davis}
Let $X\subseteq \mathbb{P}^2$ be a set of distinct points such that 
$\Delta h_{A(X)} = (\mathrm{h}_0,\mathrm{h}_1,\dots ,\mathrm{h}_{\tau(X)}).$ Assume that $\mathrm{h}_j=\mathrm{h}_{j+1}=r$ for some $j\geq t$ where $t$ is the smallest degree of the generators of the defining ideal of $X$. Then $X$ is a disjoint union of $X_1\subseteq \mathbb{P}^2$ and $X_2\subseteq \mathbb{P}^2$ such that $X_1$ lies on a curve of degree $r$ and $\Delta h_{A(X_2)}= (\mathrm{h}_r-r,\mathrm{h}_{r+1}-r	,\dots ,\mathrm{h}_{j-1}-r)$.
\end{theorem}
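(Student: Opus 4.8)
The plan is to reduce the statement to the two-variable setting, produce the curve there, and then feed the geometric information back through a residuation sequence. First I would pass to an Artinian reduction: choosing a general linear form $\ell$, the algebra $\bar A = A(X)/(\ell) = {\sf k}[x,y]/\bar I$ is a codimension-two Artinian algebra whose Hilbert function is exactly $\Delta h_{A(X)} = (\mathrm{h}_0,\dots,\mathrm{h}_{\tau(X)})$. Restricting forms to a general line $L$ identifies the degree-$m$ piece $\bar I_m$ with a space of binary forms of dimension $(m+1)-\mathrm{h}_m$. Because $\Delta h_{A(X)}$ is a two-variable $O$-sequence it rises by exactly one up to its peak and is weakly decreasing afterwards; in particular $\mathrm{h}_j=\mathrm{h}_{j+1}=r$ can only occur for $j\geq t$, and forces $r\leq t$ (with $\mathrm{h}_r=r+1$ whenever $r<t$). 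I would record these numerology facts first, since they are what make the final Hilbert-function bookkeeping come out.

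The heart of the argument, and the step I expect to be the main obstacle, is to produce the curve. I claim that flatness forces all curves of degree $j+1$ through $X$ to share a common component $C$ of degree exactly $r$. On the general line this says $\bar I_{j+1}=\bar f\cdot \bar S_{j+1-r}$ for a single binary form $\bar f$ of degree $r$: the dimensions already match, since $\dim \bar I_{j+1} = (j+2)-r = \dim \bar S_{j+1-r}$, so everything hinges on showing the common factor of $\bar I_{j+1}$ is as large as the bound $r$ permits. I would prove this via the growth theory of spaces of binary forms: the flat condition says $\bar I$ grows minimally, $\dim \bar I_{j+1}-\dim \bar I_j=1$, and minimal growth of an ideal in ${\sf k}[x,y]$ forces $\bar I_j$ and $\bar I_{j+1}$ to be multiples of a common degree-$r$ form. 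Equivalently, since codimension-two Artinian algebras have the SLP in characteristic zero (as recalled in the introduction), a general $z$ gives an isomorphism $\times z\colon \bar A_j\to \bar A_{j+1}$ (equal dimensions plus maximal rank), from which one peels off the fixed locus. The binary form $\bar f$ is then the restriction of the equation $f$ of a degree-$r$ curve $C$ in $\mathbb{P}^2$; producing this global $C$ from its restriction to a general line is the delicate point.

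Granting the curve, set $X_1 = X\cap C$ and let $X_2$ be the residual set $X\setminus C$, so that $X=X_1\sqcup X_2$ is a disjoint union with $X_1$ lying on the degree-$r$ curve $C=V(f)$ (the points of $X$ off $C$ being exactly the base points of the residual moving system). Finally I would compute $\Delta h_{A(X_2)}$ from the residuation exact sequence $0\to A(X_2)(-r)\xrightarrow{\times f} A(X)\to S/(I(X)+(f))\to 0$, using that $I(X):f = I(X_2)$ and that $S/(I(X)+(f))$ carries the Hilbert function of $X_1\subseteq C$. Passing to Hilbert functions and taking first differences gives $\Delta h_{A(X_2)}(i-r)=\mathrm{h}_i-\Delta h_{A(X_1)}(i)$; since $X_1$ lies on a curve of degree $r$ one has $\Delta h_{A(X_1)}(i)=r$ throughout the relevant range, so $\Delta h_{A(X_2)}(m)=\mathrm{h}_{m+r}-r$ for $0\leq m\leq j-1-r$, which is precisely the asserted sequence $(\mathrm{h}_r-r,\mathrm{h}_{r+1}-r,\dots,\mathrm{h}_{j-1}-r)$. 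The only work beyond the crux is justifying that $S/(I(X)+(f))$ may be replaced by $A(X_1)$ in this degree range, i.e.\ a saturation check, which I would dispatch using the numerology from the first step.
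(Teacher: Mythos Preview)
The paper does not prove this theorem. It is stated as Theorem~\ref{davis} with the attribution \cite{Davis} and is used as a black box (in the proofs of Proposition~\ref{tophess-s-2} and Theorems~\ref{points-on-conic} and~\ref{points-on-line}); no argument for it appears anywhere in the text. So there is nothing to compare your proposal against.

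For what it is worth, your outline is a recognisable sketch of the standard Davis argument: pass to the Artinian reduction, use minimal growth in ${\sf k}[x,y]$ to force a common factor of degree $r$, globalise to a curve $C$, and then run the residuation sequence for $X_2 = X\setminus C$. You correctly flag the globalisation step as the crux; as written, however, that step is not an argument but a promissory note. Knowing that $\bar I_j$ and $\bar I_{j+1}$ share a degree-$r$ GCD after restriction to a \emph{general} line does not by itself produce a single degree-$r$ form $f\in S$ dividing $I(X)_j$: you would need either to show that the GCD varies algebraically with the line and hence comes from a global $f$, or (as Davis does) to argue directly in three variables via Bezout/Castelnuovo-type considerations to pin down a fixed component of the linear system $|I(X)_j|$. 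Likewise, the final ``saturation check'' replacing $S/(I(X)+(f))$ by $A(X_1)$ in the relevant degree range is genuine work, not just numerology. If you intend this as more than a citation, those two points need actual proofs.
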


\section{Hilbert functions of Artinian Gorenstein algebras and SI-sequences}
In this section we give a  characterization of the Hilbert functions of Artinian Gorenstein algebras satisfying the SLP which generalizes Theorem 1.2 in \cite{Harima1995}.
We do so by using the higher Hessians of the Macaulay dual form of Artinian Gorenstein algebras. We first provide  an explicit expression for the higher Hessians of such polynomials $F=\sum_{i=1}^s\alpha_iL_i^d$.
\begin{lemma}\label{HessLemma}
Let $A$ be an Artinian Gorenstein quotient of $A(X)$ with dual generator $F = \sum^s_{i=1}\alpha_iL_i^d$ where $\alpha_i\neq 0$ for all $i$ and $d\geq 2\tau(X)-1$. Then for each $0\leq j\leq \tau(X)-1$ we have that 
\begin{equation}\label{hess}
\det\Hess^j(F) = \sum_{\mathcal{I}\subseteq \{1,\dots ,s\}, \vert\mathcal{I}\vert=h_{A(X)}(j)} c_\mathcal{I}\prod_{i\in \mathcal{I}}\alpha_iL_i^{d-2j},
\end{equation}
where $c_{\mathcal{I}}\in \K$. \par 
\noindent Moreover, $c_{\mathcal{I}}\neq 0$ if and only if for $X_{\mathcal{I}} = \{P_i\}_{i\in \mathcal{I}}$ we have that $h_{A(X)}(j)=h_{A(X_{\mathcal{I}})}(j)$.
\end{lemma}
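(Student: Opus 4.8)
The plan is to express the $j$-th Hessian matrix as a product $VDV^{T}$, where $V$ records the evaluations of a basis of $A_j$ at the points $P_i$ and $D$ is diagonal, and then to extract its determinant by the Cauchy--Binet formula.

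\emph{Step 1: a matrix factorization.} First I would compute the entries of $\Hess^j(F)$ directly from the definition. For any form $\alpha\in S_j$ and any $i$ one has $\alpha\circ L_i^{\,d}=\frac{d!}{(d-j)!}\,\alpha(P_i)\,L_i^{\,d-j}$, where $\alpha(P_i)$ denotes the evaluation of $\alpha$ at the coordinates of $P_i$; applying this twice gives
\[
\alpha^{(j)}_u\alpha^{(j)}_v\circ F=\frac{d!}{(d-2j)!}\sum_{i=1}^s\alpha_i\,\alpha^{(j)}_u(P_i)\,\alpha^{(j)}_v(P_i)\,L_i^{\,d-2j}.
\]
Writing $N:=h_{A(X)}(j)$, let $V=(\alpha^{(j)}_u(P_i))$ be the $N\times s$ evaluation matrix and $D=\mathrm{diag}(\alpha_1L_1^{\,d-2j},\dots,\alpha_sL_s^{\,d-2j})$. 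The computation above says precisely that $\Hess^j(F)=\frac{d!}{(d-2j)!}\,VDV^{T}$ as matrices over $R_{d-2j}$. I note that by Proposition \ref{h-vector} (together with $d\ge 2\tau(X)-1$) we have $\dim_{\K}A_j=N$ for $0\le j\le\tau(X)-1$, and since $I(X)\subseteq\ann_S(F)$ the evaluations $\alpha^{(j)}_u(P_i)$ at points of $X$ are independent of the chosen lifts of the $\alpha^{(j)}_u$ to $S_j$, so $V$ is well defined.

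\emph{Step 2: Cauchy--Binet.} Since $j\le\tau(X)-1$ we have $N<s$, so I would apply the Cauchy--Binet formula to $\det(VDV^{T})$. Factoring the diagonal entry out of each chosen row shows that the $\mathcal I$-term, for an $N$-subset $\mathcal I\subseteq\{1,\dots,s\}$, equals $\big(\det V_{\mathcal I}\big)^2\prod_{i\in\mathcal I}\alpha_iL_i^{\,d-2j}$, where $V_{\mathcal I}$ is the maximal submatrix of $V$ on the columns indexed by $\mathcal I$. This yields (\ref{hess}) with
\[
c_{\mathcal I}=\Big(\tfrac{d!}{(d-2j)!}\Big)^{N}\big(\det V_{\mathcal I}\big)^2,
\]
and in particular $c_{\mathcal I}\neq 0$ if and only if $\det V_{\mathcal I}\neq 0$.

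\emph{Step 3: interpreting the minors.} It remains to identify non-vanishing of $\det V_{\mathcal I}$ with the Hilbert function condition. The square matrix $V_{\mathcal I}$ is the matrix of the evaluation map $\mathrm{ev}_{\mathcal I}\colon S_j\to\K^{\mathcal I}$, $g\mapsto(g(P_i))_{i\in\mathcal I}$, restricted to the $N$-dimensional lift $W\subseteq S_j$ of the basis $\mathcal B_j$. Using $\ann_S(F)_j=I(X)_j$ (which follows from $I(X)\subseteq\ann_S(F)$ together with the equality of dimensions from Proposition \ref{h-vector}), one has $S_j=W\oplus I(X)_j$; and since $X_{\mathcal I}\subseteq X$ gives $I(X)_j\subseteq I(X_{\mathcal I})_j=\ker\mathrm{ev}_{\mathcal I}$, the rank of $V_{\mathcal I}$ equals the rank of $\mathrm{ev}_{\mathcal I}$, namely $h_{A(X_{\mathcal I})}(j)$. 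As $V_{\mathcal I}$ is $N\times N$, it is invertible exactly when $h_{A(X_{\mathcal I})}(j)=N=h_{A(X)}(j)$, which is the asserted criterion. I expect the main obstacle to be precisely this last step: one must argue carefully that the evaluation matrix is well defined on $A_j$ and that $W$ is a genuine complement to $I(X)_j$, so that the $N\times N$ minor faithfully computes the rank of $\mathrm{ev}_{\mathcal I}$; the factorization in Step 1 and the Cauchy--Binet bookkeeping in Step 2 are then routine.
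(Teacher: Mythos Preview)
Your proof is correct and is in fact cleaner than the paper's. The paper proceeds indirectly: it writes $\Hess^j(F)=\sum_{i=1}^s\alpha_i\Hess^j(L_i^d)$ as a sum of rank-one matrices, observes that $\det\Hess^j(F)$ is bihomogeneous of bidegree $\bigl(h_{A(X)}(j),(d-2j)h_{A(X)}(j)\bigr)$ in the $\alpha_i$'s and $L_i$'s, and then shows square-freeness in the $\alpha_i$'s by a rank contradiction (a monomial with $\alpha_1^2$ would survive after setting $\alpha_{h_{A(X)}(j)}=\cdots=\alpha_s=0$, forcing a sum of $h_{A(X)}(j)-1$ rank-one matrices to have full rank). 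The characterization of $c_{\mathcal I}\neq0$ is then obtained by specializing the $\alpha_i$'s and, for the converse, by choosing the particular basis $\{L_i^j\}_{i\in\mathcal I}$ of $A(X)_j$ to make the Hessian diagonal.

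Your route via the factorization $\Hess^j(F)=\tfrac{d!}{(d-2j)!}VDV^T$ and Cauchy--Binet accomplishes all of this in one stroke and yields the explicit formula $c_{\mathcal I}=\bigl(\tfrac{d!}{(d-2j)!}\bigr)^{N}(\det V_{\mathcal I})^2$, which the paper's argument does not provide. This extra information (that each $c_{\mathcal I}$ is a nonnegative constant times a square) could be useful elsewhere. One small organizational point: the well-definedness of $V$ in Step~1 really rests on the equality $\ann_S(F)_j=I(X)_j$ (not just the inclusion $I(X)\subseteq\ann_S(F)$), which you only establish in Step~3 via the dimension count from Proposition~\ref{h-vector}; it would read more smoothly to state that equality up front.
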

\begin{proof}
We have that 
$$
\Hess^j(F) = \sum^s_{i=1}\alpha_i\Hess^j(L_i^d).
$$
Notice that $\Hess^j(L_i^d)$ is a rank one matrix that is equal to $L_i^{d-2j}$ times a scalar matrix. Let  $T=\K[\alpha_1, \dots ,\alpha_s, L_1, \dots , L_s]$ be a polynomial ring over $\K$. For each $P_i = (a_{i,0}: \dots : a_{i,n})\in X$ denote  $L_i=a_{i,0}X_0+\cdots+a_{i,n}X_n$ and define the action of $S$ on $T$ by $x_j\circ L_i=a_{i,j}$ and $x_j\circ \alpha_i=0$ for every $1\leq i\leq s$ and $0\leq j\leq n$. \par \noindent We consider  $\det \Hess^j(F)$ as a bihomogeneous polynomial in $T$ having bidegree $\left(h_{A(X)}(j), (d-2j)h_{A(X)}(j)\right)$.
We claim that $\det \Hess^j(F)$ is square-free in $\alpha_i$'s. 
We prove the claim by showing that the coefficient of any monomial in $T$ that has exponent larger than one in $\alpha_i$'s is zero. Without loss of generality, we let $\alpha_1$ to be the only one that has exponent two. So we show that $\alpha^2_1L^{2(d-2j)}_1\prod^{h_{A(X)}(j)-1}_{i=2}\alpha_iL_i^{d-2j}$, which has bidegree $\left(h_{A(X)}(j), (d-2j)h_{A(X)}(j)\right)$, has zero coefficient in $\det \Hess^j(F)$. Assume not and set 
\begin{equation}\label{lemeq}
\det\left(\Hess^j(F)\biggm\vert_{\alpha_{h_{A(X)}(j)}=\cdots =\alpha_s=0}\right)= \det\left(\sum^{h_{A(X)}(j)-1}_{i=1}\alpha_i\Hess^j(L_i^d) \right) = \lambda \alpha^2_1L^{2(d-2j)}_1\prod^{h_{A(X)}(j)-1}_{i=2}\alpha_iL_i^{d-2j}\neq 0
\end{equation}
 for some $\lambda\in \mathsf{k}^*$. Notice that  $\Hess^j(F)\biggm\vert_{\alpha_{h_{A(X)}(j)}=\cdots =\alpha_s=0}$ is a square matrix of size $h_{A(X)}(j)$ and by the above equation has maximal rank. On the other hand, $\sum^{h_{A(X)}(j)-1}_{i=1}\alpha_i\Hess^j(L_i^d)$ is the sum of ${h_{A(X)}(j)-1}$ rank one matrices which has rank at most equal to $h_{A(X)}(j)-1$ that is a contradiction.\par
\noindent Now let $\mathcal{I}\subseteq \{1,\dots ,s\}$ such that $\vert\mathcal{I}\vert=h_{A(X)}(j)$. If $c_{\mathcal{I}}\neq 0$ in Equation (\ref{hess}) setting $\alpha_i=0$ for every $i\in \{1,\dots , s\}\setminus \mathcal{I}$ implies that $\det\Hess^j(F)\neq 0$ and therefore  $h_{A(X)}(j)=h_{A(X_{\mathcal{I}})}(j)$. Conversely, assume that we have $h_{A(X)}(j)=h_{A(X_{\mathcal{I}})}(j)$ then since $\vert\mathcal{I}\vert=h_{A(X)}(j)$ we pick  $\mathcal{B}_j=\{L^j_i\}_{i\in\mathcal{I}}$ as a basis for  $A(X)_j$. Therefore, setting  $\alpha_i=0$ for every $i\in \{1,\dots , s\}\setminus \mathcal{I}$ implies that $\Hess^j(F)$ with respect to $\mathcal{B}_j$ is a diagonal matrix with diagonal entries equal to $\frac{d!}{(d-2j)!}\alpha_iL^{d-2j}_i$ for every $i\in \mathcal{I}$ which implies that $c_{\mathcal{I}}\neq 0$.
\end{proof}
Now we are able to state and prove the main result of this section.
\begin{theorem}\label{SI-SLP-Theorem}
Let $h=\left( h_0,h_1,\dots ,h_d\right)$ be a sequence of positive integers. Then $h$ is the Hilbert function of some Artinian Gorenstein algebra with the SLP if and only if $h$ is an SI-sequence.
\end{theorem}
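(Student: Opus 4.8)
The statement is an equivalence, and the plan is to treat the two directions separately; the necessity is short and essentially classical, while the sufficiency is where Lemma~\ref{HessLemma} does the work. For the necessity, suppose $A=S/\ann_S(F)$ is Artinian Gorenstein of socle degree $d$ with the SLP, and let $h$ be its Hilbert function. Being Gorenstein, $A$ has symmetric $h$; and since the SLP implies the WLP, $h$ is unimodal. For the differentiability of the first half I would fix a weak Lefschetz element $\ell$ and pass to the standard graded algebra $B=A/\ell A$: for every $i\le\lfloor d/2\rfloor$ we are in the non-decreasing range $h_{i-1}\le h_i$, so maximal rank forces $\times\ell\colon A_{i-1}\to A_i$ to be injective and hence $\dim_{\K}B_i=h_i-h_{i-1}=(\Delta h)_i$. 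As the Hilbert function of the standard graded algebra $B$ is an $O$-sequence by Macaulay's theorem, $(\Delta h_0,\dots,\Delta h_{\lfloor d/2\rfloor})$ is an $O$-sequence, i.e. the first half of $h$ is differentiable; thus $h$ is an SI-sequence. (Alternatively this direction follows at once from Harima's theorem, since the SLP implies the WLP.)

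For the sufficiency, let $h=(h_0,\dots,h_d)$ be an SI-sequence and put $s=h_{\lfloor d/2\rfloor}=\max_i h_i$. I would first extend the first half of $h$ to the sequence $\bar h=(h_0,\dots,h_{\lfloor d/2\rfloor},s,s,\dots)$, constant from degree $\lfloor d/2\rfloor$ on. Since the first half of $h$ is differentiable, $\Delta\bar h=(\Delta h_0,\dots,\Delta h_{\lfloor d/2\rfloor},0,0,\dots)$ is an $O$-sequence, so $\bar h$ is a differentiable $O$-sequence and Theorem~\ref{diffOseqthm} supplies a set $X$ of $s$ distinct points in $\mathbb{P}^{h_1-1}$ with $h_{A(X)}=\bar h$. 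By construction $\tau(X)\le\lfloor d/2\rfloor$, so $2\tau(X)-1\le 2\lfloor d/2\rfloor-1\le d$ and the hypotheses of Proposition~\ref{h-vector} and Lemma~\ref{HessLemma} are met. Choosing scalars $\alpha_1,\dots,\alpha_s$ all nonzero and setting $F=\sum_{i=1}^s\alpha_i L_i^d$ with $L_i$ dual to $P_i$, Proposition~\ref{h-vector} makes $A=S/\ann_S(F)$ Gorenstein with $h_A(i)=h_{A(X)}(i)=h_i$ for $i\le\lfloor d/2\rfloor$ and, by the symmetry of $h$, with $h_A(i)=h_{A(X)}(d-i)=h_{d-i}=h_i$ for $i\ge\lceil d/2\rceil$; hence $h_A=h$. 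By the Maeno--Watanabe criterion \cite{MW} it now suffices to find one linear form $\ell$ with $\det\Hess^j_\ell(F)\neq 0$ for all $0\le j\le\lfloor d/2\rfloor$.

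The core of the argument is to show each $\det\Hess^j(F)$ is a nonzero polynomial. For $0\le j\le\tau(X)-1$ I would invoke Lemma~\ref{HessLemma}: it is enough to exhibit one index set $\mathcal{I}$ with $|\mathcal{I}|=h_{A(X)}(j)$ and $h_{A(X_{\mathcal{I}})}(j)=h_{A(X)}(j)$, since then $c_{\mathcal{I}}\neq 0$ and, because distinct index sets contribute distinct square-free monomials in the $\alpha_i$, this term cannot be cancelled. Such an $\mathcal{I}$ exists by a rank count on evaluation functionals: the maps $v_i\colon f\mapsto f(P_i)$ on $S_j$ span a subspace of $(S_j)^{\ast}$ of dimension $r:=h_{A(X)}(j)$, so $r$ of them, say $\{v_i\}_{i\in\mathcal{I}}$, are linearly independent, and then $h_{A(X_{\mathcal{I}})}(j)=\rk(S_j\to\K^{\mathcal{I}})=r$. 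For $\tau(X)\le j\le\lfloor d/2\rfloor$ we have $h_{A(X)}(j)=s$, the only admissible set is $\mathcal{I}=\{1,\dots,s\}$, and $\det\Hess^j(F)$ is a nonzero monomial up to scalar. Thus $\det\Hess^j(F)\not\equiv 0$ in $\K[\alpha_1,\dots,\alpha_s,X_0,\dots,X_n]$ for every $j$. Since the field is infinite of characteristic zero, a general choice of nonzero $\alpha_i$ keeps each $\det\Hess^j(F)$ a nonzero form in the $X_k$ (while preserving $h_A=h$), and then a general $\ell$ avoids the finitely many hypersurfaces $\{\det\Hess^j_\ell(F)=0\}$ and is a strong Lefschetz element for $A$.

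The step I expect to be the main obstacle is precisely this last nonvanishing: guaranteeing, for every $j$ in the range, both the existence of a subset of $h_{A(X)}(j)$ points imposing independent conditions in degree $j$ and the absence of cancellation among the resulting terms of the determinant. The non-cancellation is handed to us by the square-free structure in the $\alpha_i$ from Lemma~\ref{HessLemma}, and the existence of the subset reduces to the rank computation above; assembling a single $\ell$ valid for all $j$ simultaneously is then a routine genericity argument over an infinite field.
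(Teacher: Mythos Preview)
Your proof is correct and follows essentially the same route as the paper's. The only differences are cosmetic: you choose generic $\alpha_i$ before generic $\ell$ whereas the paper fixes $\ell$ first (with $\ell\circ L_i\neq 0$ for all $i$) and then varies the $\alpha_i$, and for $\tau(X)\le j\le\lfloor d/2\rfloor$ the paper argues directly that $\times\ell^{d-2j}$ is a bijection on $A(X)$ in those degrees rather than by extending Lemma~\ref{HessLemma}, which is stated only for $j\le\tau(X)-1$ (though its proof carries over verbatim).
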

\begin{proof}
Suppose $A$ is an Artinian Gorenstein algebra with the Hilbert function $h$ and the strong Lefschetz element $\ell\in A_1$  that is in particular the weak Lefschetz element and using  \cite[Theorem 1.2]{Harima1995} we conclude that  $h$ is an SI-sequence.

Conversely, assume that $h$ is an SI-sequence. We set $h_1=n+1$ and $h_t=s$ where $t=\min \{i\mid h_i\geq h_{i+1}\}$. So we have 
\begin{equation}\label{hilbertfunction}
h=\left(1,n+1,\dots ,s,\dots , s,\dots ,n+1,1\right).
\end{equation}
Define a sequence of integers $\overline{h}=(\overline{h}_0,\overline{h}_1,\dots )$ such that $\overline{h}_i=h_i$ for $i=0,\dots , t$ and $\overline{h}_i=s$ for $i\geq t$. Assuming that $h$ is an SI-sequence implies that $\overline{h}$ is a differentiable O-sequence and by Theorem \ref{diffOseqthm} there exists  $X = \{P_1,\dots ,P_s\}\subseteq \mathbb{P}^{n}$ such that the Hilbert function of its coordinate ring $A(X)$ is equal to $\overline{h}$, that is $h_{A(X)}=\overline{h}$.  Denote by  $\{L_1,\dots, L_{s}\}$ the linear forms dual to $\{P_1,\dots ,P_s\}$.
 As in Proposition \ref{dualForm}, let $A$ be the Artinian Gorenstein quotient of $A(X)$ with dual generator  $F = \sum^{s}_{i=1}\alpha_i L^{d}_i$ for $d\geq 2\tau(X)$, notice that $\tau(X)=t$. By Proposition \ref{h-vector}, in order to have  $h_A=h$ we must have $\alpha_i\neq 0$ for all $i$. Let $L$ be a linear form dual to $P\in \mathbb{P}^n$ such that $X\cap P=\emptyset$ and denote by $\ell$ the dual linear form to $L$. Therefore, $\beta_i:= \ell\circ L_i\neq 0$ for every $1\leq i\leq s$. We claim that there exist $\alpha_1, \dots , \alpha_{s}$ such that $\ell$ is the strong Lefschetz element for $A$. First note that for every $j = t,\dots ,\lfloor\frac{d}{2}\rfloor$ the multiplication map by $\times \ell^{d-2j}:A_j\rightarrow A_{d-j}$ can be considered as the multiplication map on $A(X)$, that is $\times \ell^{d-2j}:A(X)_j\rightarrow A(X)_{d-j}$ which has trivially maximal rank.\par 
Now we prove that there is a Zariski open set for $\alpha_i$'s such that for every $j=0,\dots , t-1$ the $j$-th Hessian matrix of $F$ evaluated at $\ell$ has maximal rank, that is 
\begin{align*}
\rk \Hess^j_\ell(F)= h_A(j)=h_j.
\end{align*}
Using Lemma \ref{HessLemma} we get that 
\begin{equation}
\det \Hess^j_\ell(F)=\sum_{\mathcal{I}\subseteq \{1,\dots ,s\},\vert\mathcal{I}\vert=h_j}c_\mathcal{I}\prod_{i\in \mathcal{I}}\alpha_i\beta^{d-2j}_i
\end{equation}
where $c_\mathcal{I}\neq 0$ if and only if $h_{A(X_{\mathcal{I}})}(j)=h_j$ for $X_{\mathcal{I}}=\{P_i\}_{\mathcal{I}}$. Notice that since there is at least one subset $\mathcal{I}$ such that $c_{\mathcal{I}}\neq 0$ the determinant of the $j$-th Hessian is not identically zero. Therefore, $\det \Hess^j_\ell\neq 0$ or equivalently $\rk\Hess^j_\ell(F)=h_j$ for each $i=0,\dots ,t-1$, provides a Zariski open subset of $\mathbb{P}^{s-1}$ for $\alpha_i$'s and therefore the intersection of all those open subsets is non-empty. Equivalently, there is an
Artinian Gorenstein algebra $A$ such that $h_A=h$ and satisfies the SLP with $\ell\in A_1$.
\end{proof}
\section{Higher Hessians of Artinian Gorenstein quotients of $A(X)$ }\label{section4}
In this section we prove the non-vanishing of some of the higher Hessians for any Artinian Gorenstein quotient of $A(X)$ for $X\subset \mathbb{P}^n$ under some conditions on the configuration of the points in $X$. In some cases we conclude that they satisfy the SLP.
\begin{proposition}\label{tophess-s-1}
Let $X=\{P_1,\dots , P_s\}$ be a set of points in $\mathbb{P}^n$ and $A$ be any Artinian Gorenstein quotient of $A(X)$ with dual generator $F=\sum_{i=1}^s\alpha_iL_i^d$ for $d\geq 2\tau(X)-1$. Assume that $h_A(j)=s-1$ for some $j\geq 0$ then there is a linear form $\ell$ such that 
$$
\det\Hess^j_\ell(F)\neq 0.
$$
\end{proposition}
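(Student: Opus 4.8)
The plan is to combine the explicit expansion of Lemma~\ref{HessLemma} with a partial-fraction non-vanishing argument. First I would record that the hypothesis $h_A(j)=s-1$ forces $j\le\tau(X)-1$: since $d\ge 2\tau(X)-1$, Proposition~\ref{h-vector} gives $h_A(j)=h_{A(X)}(j)$ for $j\le\lfloor d/2\rfloor$, and $h_{A(X)}$ is non-decreasing, reaching its maximum $s$ exactly at $\tau(X)$, so $h_{A(X)}(j)=s-1<s$ forces $j<\tau(X)$. In particular $m:=d-2j\ge 1$ and Lemma~\ref{HessLemma} applies. The subsets $\mathcal I\subseteq\{1,\dots,s\}$ with $|\mathcal I|=s-1$ are exactly the complements $\mathcal I_k=\{1,\dots,s\}\setminus\{k\}$, so evaluating the formula of Lemma~\ref{HessLemma} at the point $P$ dual to $\ell$ yields
\[
\det\Hess^j_\ell(F)=\sum_{k=1}^s d_k\prod_{i\ne k}L_i(P)^{m},\qquad d_k:=c_{\mathcal I_k}\prod_{i\ne k}\alpha_i,
\]
a homogeneous polynomial of degree $(s-1)m$ in the coordinates of $P$.

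Next I would show at least one $d_k$ is nonzero. Since $\rk\Cat^j_F=h_A(j)=s-1$, the form $F$ must involve at least $s-1$ of the points with nonzero coefficient; otherwise $A$ would be a quotient of $A(X_J)$ for some $J$ with $|J|\le s-2$, forcing $h_A(j)\le s-2$. Hence at most one $\alpha_i$ vanishes. If $\alpha_{i_0}=0$, then $F$ is supported on $X'=X\setminus\{P_{i_0}\}$, so $A$ is a quotient of $A(X')$ and $s-1=h_A(j)\le h_{A(X')}(j)\le s-1$; thus $h_{A(X')}(j)=s-1$, whence $c_{\mathcal I_{i_0}}\ne 0$ by Lemma~\ref{HessLemma} while $\prod_{i\ne i_0}\alpha_i\ne 0$, giving $d_{i_0}\ne 0$. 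If every $\alpha_i\ne 0$, I would use the evaluation map $\mathrm{ev}_j\colon S_j\to\K^s$, $g\mapsto(g(P_1),\dots,g(P_s))$, whose image is a hyperplane $\{\lambda_1 y_1+\cdots+\lambda_s y_s=0\}$ with $\lambda\ne 0$; projecting off the $k$-th coordinate shows $h_{A(X\setminus\{P_k\})}(j)=s-1$ precisely when $\lambda_k\ne 0$. Choosing any such $k$ gives $c_{\mathcal I_k}\ne 0$ and hence $d_k\ne 0$.

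Finally I would prove that $\det\Hess^j_\ell(F)$ is not identically zero as a polynomial in $P$, so that a generic $\ell$ works. Suppose $\sum_k d_k\prod_{i\ne k}L_i(P)^m\equiv 0$. The linear forms $L_1,\dots,L_s$ are pairwise non-proportional because the $P_i$ are distinct, so fixing $k_0$ and restricting the identity to the hyperplane $\{L_{k_0}=0\}$ kills every term with $k\ne k_0$ (each contains the factor $L_{k_0}^m$), while the $k_0$-th term restricts to $d_{k_0}\prod_{i\ne k_0}(L_i|_{L_{k_0}=0})^m$; since each $L_i|_{L_{k_0}=0}$ with $i\ne k_0$ is a nonzero linear form and $m\ge1$, this product is a nonzero polynomial, forcing $d_{k_0}=0$. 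As $k_0$ was arbitrary all $d_k$ vanish, contradicting the previous step. Therefore the polynomial is nonzero and there is a linear form $\ell$ with $\det\Hess^j_\ell(F)\ne 0$.

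The main obstacle is the second step: producing a single index $k$ for which both $c_{\mathcal I_k}\ne 0$ (a Hilbert-function condition on the deleted configuration $X\setminus\{P_k\}$) and $\prod_{i\ne k}\alpha_i\ne 0$ hold simultaneously. The partial-fraction argument in the last step is then routine once such a surviving monomial is in hand.
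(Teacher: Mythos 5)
Your proof is correct and follows essentially the same route as the paper: expand $\det\Hess^j(F)$ via Lemma~\ref{HessLemma} over the $(s-1)$-element subsets, which are exactly the complements $\mathcal I_k$, and observe that $L_k^{d-2j}$ divides every term except the $k$-th, so a single surviving coefficient $d_k\neq 0$ forces the sum to be a nonzero polynomial in the $X_i$'s (the paper phrases this as a unique-factorization contradiction rather than restriction to $\{L_{k_0}=0\}$, but it is the same divisibility argument). You are in fact more careful than the paper on the one point it glosses over, namely exhibiting an index $k$ with both $c_{\mathcal I_k}\neq 0$ and $\prod_{i\neq k}\alpha_i\neq 0$ --- the paper simply writes ``without loss of generality assume $c_{\{2,\dots,s\}}\neq 0$'' --- and your evaluation-map/hyperplane argument supplies that missing justification cleanly.
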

 \begin{proof}
 Using Lemma \ref{detLemma} we have that
 $$
\det   \Hess^j(F) = \sum_{\mathcal{I}\subseteq \{1,\dots ,s\}, \vert\mathcal{I}\vert=s-1}c_\mathcal{I}\prod_{i\in \mathcal{I}}\alpha_iL^{d-2j}_i.
  $$
We prove that $\det \Hess^j(F)\neq 0 $ as a polynomial in $X_i$'s. 
Without loss of generality assume that for $\mathcal{I}=\{2,\dots , s \}$ we have that $c_{\{2,\dots ,s\}}\neq 0$. Then if  $\det\Hess^j(F)$ is identically zero  we get that 
$$
c_{\{2,\dots , s\}}\prod^s_{i=2}\alpha_iL^{d-2j}_i = -\alpha_1L^{d-2j}_1\left(\sum_{\mathcal{I}\subseteq\{2,\dots ,s\}, \vert\mathcal{I}\vert=s-2}c_\mathcal{I}\prod_{i\in \mathcal{I}}\alpha_iL^{d-2j}_i\right).
$$
This contradicts the fact that $R={\sf k}[X_0,\dots,X_n]$ is a unique factorization domain. We conclude that there exists $\ell$ such that   $\det \Hess_\ell^j(F)\neq 0$.
\end{proof}
\begin{proposition}\label{tophess-s-2}
Let $s\geq 3$ and $X=\{P_1,\dots , P_s\}$  be a set of points in $\mathbb{P}^2$ in a general linear position.
Let $A$ be any Artinian Gorenstein quotient of $A(X)$ with dual generator $F=\sum_{i=1}^s\alpha_iL_i^d$ for $d\geq 2\tau(X)-1$ and assume that $h_A(j)=s-2$ for some $0 \leq j\leq \frac{d+1}{2}$. Then there is a linear form $\ell$ such that 
$$
\det\Hess^j_\ell(F)\neq 0.
$$
 \end{proposition}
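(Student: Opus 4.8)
The plan is to follow the architecture of the proof of Proposition~\ref{tophess-s-1}, but to replace the unique-factorization argument by exhibiting one explicit point at which the Hessian determinant is non-zero. By the symmetry $h_A(j)=h_A(d-j)$ I may and do assume $j\leq\lfloor d/2\rfloor$; since $h_A(j)=s-2<s$ this forces $j\leq\tau(X)-1$, and together with $d\geq 2\tau(X)-1$ it gives $e:=d-2j\geq 1$. In particular Lemma~\ref{HessLemma} applies and yields the identity in $R$
\begin{equation*}
\det\Hess^j(F)=\sum_{\mathcal{I}\subseteq\{1,\dots,s\},\,|\mathcal{I}|=s-2}c_{\mathcal{I}}\prod_{i\in\mathcal{I}}\alpha_iL_i^{\,e},
\end{equation*}
in which some $c_{\mathcal{I}_0}$ is non-zero: because the evaluation map $R_j\to\K^s$ has rank $h_A(j)=s-2$, a coordinate $(s-2)$-subset $\mathcal{I}_0$ exists onto which its image projects isomorphically, and then $h_{A(X_{\mathcal{I}_0})}(j)=s-2$.

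Next I would use the two omitted points to select the evaluation point. Write $\{k,l\}=\{1,\dots,s\}\setminus\mathcal{I}_0$. In $\mathbb{P}^2$ the two distinct lines $\{L_k=0\}$ and $\{L_l=0\}$ meet in a single point $P$; let $\ell$ be the linear form dual to $P$, so that $L_k(P)=L_l(P)=0$. Evaluating the displayed identity at $P$ is legitimate because the determinant commutes with the evaluation homomorphism, so $\det\Hess^j_\ell(F)=[\det\Hess^j(F)](P)$. Every term indexed by an $\mathcal{I}\neq\mathcal{I}_0$ omits a pair different from $\{k,l\}$ and therefore contains at least one of $k,l$; hence its product carries a factor $L_k^{\,e}$ or $L_l^{\,e}$, and since $e\geq 1$ this factor vanishes at $P$ and the whole term dies.

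Only the term indexed by $\mathcal{I}_0$ survives, giving $\det\Hess^j_\ell(F)=c_{\mathcal{I}_0}\prod_{i\in\mathcal{I}_0}\alpha_iL_i(P)^{\,e}$, and it remains to check this is non-zero. This is exactly where the hypotheses enter: $c_{\mathcal{I}_0}\neq 0$ by choice and $\alpha_i\neq 0$ by assumption, while the general linear position of $X$ guarantees that no third point $P_i$ with $i\in\mathcal{I}_0$ is collinear with $P_k$ and $P_l$, i.e. $L_i\notin\langle L_k,L_l\rangle$, i.e. $L_i(P)\neq 0$. As each factor is non-zero, the product is non-zero, which proves the claim.

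I expect the only genuinely delicate point to be this last verification that the surviving factors $L_i(P)$ do not vanish, and it is precisely what forces the ambient space to be $\mathbb{P}^2$ together with general linear position: one needs every three of the forms $L_1,\dots,L_s$ to be linearly independent, so that the third point avoids the point $P=\{L_k=L_l=0\}$. In $\mathbb{P}^n$ with $n\geq 3$ the locus $\{L_k=L_l=0\}$ is positive-dimensional and a third form could vanish on all of it, so the argument genuinely requires $n=2$. The reduction to $j\leq\lfloor d/2\rfloor$ and the bound $e\geq 1$ are routine but must be recorded, since for $e=0$ the determinant is a constant and the evaluation argument collapses.
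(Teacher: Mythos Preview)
Your proof is correct, and it rests on the same geometric idea as the paper's: evaluate the polynomial $\det\Hess^j(F)$ at the unique point $P$ of the dual plane where $L_k=L_l=0$ (for the two indices omitted from a good subset $\mathcal{I}_0$), and use general linear position to ensure $L_i(P)\neq 0$ for all $i\in\mathcal{I}_0$. The paper reaches this step only after a case analysis on $h_A(j+1)$ and a reduction, in the main case, to $d=2j+1$ by replacing $F$ with $G=\ell^k\circ F$, so that the exponents on the $L_i$ drop to $1$; it then runs (the contrapositive of) the same evaluation argument. You bypass this entirely by noting that the evaluation argument works verbatim with $L_i^{\,e}$ for any $e\geq 1$, so the case split and the passage to $G$ are unnecessary. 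Two minor points: the evaluation map you invoke to produce $\mathcal{I}_0$ should be $S_j\to\K^s$ rather than $R_j\to\K^s$; and your appeal to ``$\alpha_i\neq 0$ by assumption'' is not explicit in the statement, though the paper's proof (through its use of Lemma~\ref{HessLemma}) tacitly relies on the same hypothesis.
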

 \begin{proof}
If $h_A(j+1)<s-2$ then the maximum value of $h_A$ is equal to $s-2$ and also we have that $j=\frac{d}{2}$. Therefore,  $\Hess^j(F)=\Cat^j_F$ is the trivial multiplication on $A_j$ which clearly has maximal rank.\par 
\noindent If $h_A(j+1)>s-2$ then the assumption on $X$ implies that $h_A(j+1)=s$. Since we have $h_A(j+1)=s-1$ the last three non-zero entries of $\Delta h_{A(X)}$ are equal to one. So E. D. Davis's theorem \ref{davis} implies that $X$ contains at least three colinear points. 
Therefore, 
 $$
 h_A=(1,3,\dots , s-2,\underbrace{s,\dots ,s}_k, s-2, \dots , 3,1),
 $$
 for some $k\geq 1$. Note that for a linear form $\ell$ such that $\ell\circ L_i\neq 0$ for every $i$ we get that the multiplication map $\ell^{d-2i}:A_i\rightarrow A_{d-i}$ for every $j+1\leq i\leq \lfloor\frac{d}{2}\rfloor$ is a map on $A(X)$ in the same degrees and therefore has maximal rank. So $\det\Hess^j_\ell(F)\neq 0$ if and only if $\det\Hess^j_\ell(\ell^k\circ F)\neq 0$.
 Denote by $\beta_i=\ell\circ L_i\neq 0$ for each $i$. So we have that
 $$
G:= \ell^k\circ F = \frac{d!}{(d-k)!}\sum^s_{i=1}\alpha_i\beta^k_iL^{d-k}_i.
 $$
The Artinian Gorenstein quotient of $A(X)$ with dual generator $G$ has the following Hilbert function
$$
(1,3,\dots ,s-2,s-2,\dots ,3,1).
$$
Therefore, it is enough to show that $\det\Hess_\ell^j(F)\neq 0$ for some $\ell$ in the case $h_A(j)=h_A(j+1)=s-2$. Note that in this case $d=2j+1$. By Lemma \ref{detLemma} we have that 
  \begin{equation}
 \det \Hess^j(F) = \sum_{\mathcal{I}\subseteq \{1,\dots ,s\}, \vert\mathcal{I}\vert=s-2}c_\mathcal{I}\prod_{i\in \mathcal{I}}\alpha_iL_i,
 \end{equation}
such that $c_{\mathcal{I}}\neq 0$ if and only if $h_{A(X)}(j)=h_{A(X_{\mathcal{I}})}(j)$. Without loss of generality assume that $c_{\{3,\dots ,s\}}\neq 0$. Suppose that  $\det \Hess^{j}(F)=0$ then 
$$
c_{\{3,\dots , s\}}\prod^s_{i=3}\alpha_iL_i = -\alpha_1L_1\left(\sum_{\mathcal{I}\subseteq\{3,\dots ,s\}, \vert\mathcal{I}\vert=s-3}c_\mathcal{I}\prod_{i\in \mathcal{I}}\alpha_iL_i\right) -\alpha_2L_2\left(\sum_{\mathcal{I}\subseteq\{1,3,\dots ,s\}, \vert\mathcal{I}\vert=s-3}c_\mathcal{I}\prod_{i\in \mathcal{I}}\alpha_iL_i\right).
$$
Common zeros of $L_1$ and $L_2$ correspond to the line passing through $P_{1}$ and $P_{2}$. By the assumption this line does not pass through any other point in $\{P_{3},\dots ,P_{s}\}$ which means that the left hand side of the above equality is nonzero on the points where $L_1=L_2=0$ which is a contradiction. So this implies that $\det \Hess^j(F)\neq 0$ and therefore $\det\Hess_\ell^{j}(F) \neq 0$,  for some linear form $\ell$.
 \end{proof}

 \begin{theorem}[Points on a rational normal curve]\label{smoothconic}
Let $X=\{P_1,\dots , P_s\}$ be a set of points in $\mathbb{P}^n$ lying on a rational normal curve. Assume that $A$ is an Artinian Gorenstein quotient of $A(X)$ with dual generator $F=\sum_{i=1}^s\alpha_iL_i^d$ for $d\geq 2\tau(X)$ and $\alpha_i\neq 0$ for every $i$. Then $A$ satisfies the SLP.
\end{theorem}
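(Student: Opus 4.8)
The plan is to reduce the statement to the classical fact that every codimension-two Artinian Gorenstein algebra over a field of characteristic zero has the SLP (see \cite{HMNW,Briancon}), exploiting that a rational normal curve is the image of the Veronese embedding $\nu_n\colon\mathbb P^1\to\mathbb P^n$. Since the SLP and the whole set-up are preserved under a linear change of coordinates on $\mathbb P^n$, I may assume $X$ lies on the standard rational normal curve, so that $P_i=\nu_n([u_i:w_i])$ and $L_i=\sum_{k=0}^n u_i^{\,n-k}w_i^{\,k}X_k$ for distinct $[u_i:w_i]\in\mathbb P^1$. On the binary side I set $\tilde S=\K[u,w]$ acting on $\tilde R=\K[U,W]$, put $\tilde L_i=u_iU+w_iW$ and $\tilde F=\sum_{i=1}^s\alpha_i\tilde L_i^{\,nd}\in\tilde R_{nd}$, and let $\tilde A=\tilde S/\ann_{\tilde S}(\tilde F)$. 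The link between the two sides is the Veronese homomorphism $\pi\colon S\to\tilde S$ determined by $x_k\mapsto u^{\,n-k}w^{\,k}$, which in each degree gives a surjection $\pi\colon S_p\twoheadrightarrow\tilde S_{np}$.

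The key step is to promote $\pi$ to graded isomorphisms $\bar\pi_p\colon A_p\xrightarrow{\ \sim\ }\tilde A_{np}$ for all $0\le p\le d$. The computation behind this is the apolarity identity: for $D\in S_p$ one has $D\circ F=\tfrac{d!}{(d-p)!}\sum_i\alpha_i\,\pi(D)(u_i,w_i)\,L_i^{\,d-p}$ and $\pi(D)\circ\tilde F=\tfrac{(nd)!}{(nd-np)!}\sum_i\alpha_i\,\pi(D)(u_i,w_i)\,\tilde L_i^{\,nd-np}$, so both vanishing conditions are governed by the same scalars $\pi(D)(u_i,w_i)$. Whenever $d-p\ge\tau(X)$ the power sets $\{L_i^{\,d-p}\}$ and $\{\tilde L_i^{\,n(d-p)}\}$ are each linearly independent (distinct points, degree at least $s-1$, using $n\tau(X)+1\ge s$), whence $D\in\ann(F)_p\iff\pi(D)\in\ann(\tilde F)_{np}$; combined with $\dim A_p=h_A(p)=\min(np+1,s,nd{+}1{-}np)=h_{\tilde A}(np)$ this yields that $\bar\pi_p$ is an isomorphism for every $p\le d-\tau(X)$. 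For the remaining top degrees I dualize: the same computation gives $D_1D_2\circ F=\tfrac{d!}{(nd)!}\,\pi(D_1)\pi(D_2)\circ\tilde F$ for $D_1\in S_p$, $D_2\in S_{d-p}$, so $\bar\pi$ intertwines the perfect Gorenstein pairings $A_p\times A_{d-p}\to\K$ and $\tilde A_{np}\times\tilde A_{nd-np}\to\K$ up to a nonzero scalar. Hence an isomorphism in degree $d-p\le d-\tau(X)$ forces an isomorphism in degree $p$, and because $d\ge 2\tau(X)$ the ranges $p\le d-\tau(X)$ and $p\ge\tau(X)$ cover all of $\{0,\dots,d\}$.

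With all $\bar\pi_p$ isomorphisms in hand, the fact that $\pi$ is a ring homomorphism makes every multiplication square commute, so for $\ell\in S_1$ with image $b:=\pi(\ell)\in\tilde S_n$ the map $\times\ell^{\,d-2j}\colon A_j\to A_{d-j}$ is conjugate, via the $\bar\pi$'s, to $\times b^{\,d-2j}\colon\tilde A_{nj}\to\tilde A_{nd-nj}$. Now $\tilde A$ is a codimension-two Artinian Gorenstein algebra over a field of characteristic zero, hence has the SLP; choose a strong Lefschetz element $\tilde\ell=\lambda u+\mu w$ and take $\ell=\sum_{k=0}^n\binom{n}{k}\lambda^{\,n-k}\mu^{\,k}x_k$, so that $b=\pi(\ell)=\tilde\ell^{\,n}$ and $b^{\,d-2j}=\tilde\ell^{\,nd-2nj}$. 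Then $\times b^{\,d-2j}\colon\tilde A_{nj}\to\tilde A_{nd-nj}$ is exactly the central strong Lefschetz map of $\tilde A$ in degree $nj$, which has maximal rank for every $j$; transporting back, $\times\ell^{\,d-2j}\colon A_j\to A_{d-j}$ has maximal rank for all $0\le j\le\lfloor d/2\rfloor$. By the Maeno--Watanabe criterion this means $\det\Hess^j_\ell(F)\neq0$ for all such $j$, so $\ell$ is a strong Lefschetz element and $A$ has the SLP.

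I expect the main obstacle to be exactly the construction of the isomorphisms $\bar\pi_p$ in the top degrees $p>d-\tau(X)$, where the direct apolarity argument breaks down because the relevant powers $\{L_i^{\,d-p}\}$ are no longer independent; the remedy is the compatibility of $\pi$ with the Gorenstein pairings together with the hypothesis $d\ge 2\tau(X)$, which is precisely what guarantees the two good ranges overlap. The remaining points to check carefully are the uniform-position Hilbert function $h_{A(X)}(q)=\min(nq+1,s)$ for points on the curve (equivalently $A(X)\cong\tilde B^{(n)}$ for the $s$-point scheme $\tilde B=\K[u,w]/I(Y)$ on $\mathbb P^1$) and the independence of powers of distinct binary linear forms once the degree reaches $s-1$.
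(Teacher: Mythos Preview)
Your proof is correct and follows the same strategy as the paper: pull back to $\mathbb{P}^1$ via the Veronese map, identify $A_p$ with the degree-$np$ piece of a codimension-two Gorenstein algebra $\tilde A$, and transport a strong Lefschetz element $\tilde\ell$ of $\tilde A$ to $A$ via $\pi(\ell)=\tilde\ell^{\,n}$. You in fact supply the justification the paper omits for the isomorphisms $A_p\cong\tilde A_{np}$---the apolarity identity for $p\le d-\tau(X)$ together with Gorenstein duality for $p\ge\tau(X)$, using $d\ge 2\tau(X)$ to cover all degrees---and you construct the preimage $\ell$ of $\tilde\ell^{\,n}$ explicitly rather than writing the ambiguous $\psi^{-1}(\ell^n)$.
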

\begin{proof}
Denote by $Y =\{Q_1,\dots ,Q_s\}\subset \mathbb{P}^1$ the preimage  of $X$ under the Veronese embedding $\varphi :\mathbb{P}^1\longrightarrow \mathbb{P}^n.$
For each $i=1,\dots ,s$ denote by $K_i$ the linear form in ${\sf k}[S,T]$ dual to $Q_i$. Let $B$ be any Artinian Gorenstein quotient of  the coordinate ring of $Y$, ${\sf k}[s,t]/I(Y)$, with dual generator $G = \sum_{i=1}^s\beta_iK_i^{nd}$. \par 
\noindent The Artinian Gorenstein algebra $B$ has the following Hilbert function
$$
h_B=(1,2,3,\dots, \underbrace{s,\dots , s}_k,\dots ,3,2,1),
$$
for some $k\geq 1$ since we have assumed that $d\geq 2\tau(X)$ and $\alpha_i\neq 0$ for every $i$.  It is known that  $B$ has the SLP  for some linear form $\ell\in B_1$ \cite[Proposition 2.2]{HMNW}. The Veronese embedding $\varphi$ gives a map of rings $\psi : S=\mathsf{k}[x_0,\dots ,x_n]\rightarrow \mathsf{k}[s,t]$ defined by taking each $x_i$ to a different monomial of degree $n$ in $s,t$. The map $\psi$ induces isomorphisms $A_j\cong B_{nj}$ as $\sf k$-vector spaces for every $j$. Let $\ell^\prime:=\psi^{-1}(\ell^n)\in A_1$, then we have 
$$\rk\left(\times (\ell^\prime)^{d-2j}:A_j\longrightarrow A_{d-j} \right) = \rk\left(\times (\ell)^{n(d-2j)}:B_{nj}\longrightarrow B_{nd-nj} \right) = nj+1=\dim_{\sf k}A_j.$$
Thus $A$ satisfies the  SLP with  linear form $\ell^\prime$.
\end{proof}

The above proposition shows that every Artinian Gorenstein quotient of $A(X)$ such that $X\subset \mathbb{P}^2$ consists of points on a smooth conic  satisfies the SLP.
We will show that the SLP also holds when $X\subset \mathbb{P}^2$ consists of points on a singular  conic.  \par 
\noindent First we need to prove a lemma.
\begin{lemma}\label{detLemma}
Let $A=B+C$ be  a square matrix of size $2m-1$ for $m\geq 1$ as the following
\begin{small}
\begin{equation}
B = \begin{pmatrix}
f_1&f_2&\dots & f_m &0&\cdots &0\\
f_2&f_3&\dots & f_{m+1}&0&\cdots &0\\
\vdots &\vdots && \vdots  &\vdots & &\vdots \\
f_{m} &f_{m+1} &\dots & f_{2m}  &0&\cdots &0\\
0&0&\dots & 0 &0&\cdots &0\\
\vdots &\vdots && \vdots  &&\vdots &\vdots \\
0&0&\dots & 0  &0&\cdots &0\\
\end{pmatrix}, \quad C = \begin{pmatrix}
0&\dots & 0  &0&\cdots &0&0\\
0&\dots & 0  &0&\cdots &0&0\\
\vdots & &\vdots & \vdots  &&\vdots &\vdots \\
0 &\dots &0& g_{2m}  &\dots & g_{m+1} &g_m\\
\vdots &&\vdots & \vdots  &&\vdots &\vdots \\
0 &\dots &0& g_{m+1}  &\dots & g_{3} &g_2\\
0 &\dots &0& g_{m}  &\dots & g_{2} &g_1\\
\end{pmatrix}.
\end{equation}
\end{small}

Then $$
\det A = (\det B_{\{1,\dots , m-1\}})(\det C_{\{m,\dots ,2m-1\}})+(\det B_{\{1,\dots , m\}})(\det C_{\{m+1,\dots ,2m-1\}}),
$$
such that for a subset $\mathcal{J}\subset \{1,\dots ,2m-1\}$ we denote by $B_{\mathcal{J}}$ and $C_{\mathcal{J}}$ the square submatrices of $B$ and $C$ respectively with rows and columns in the index set $\mathcal{J}$.
\end{lemma}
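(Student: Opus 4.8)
The plan is to exploit the very special support of the two summands. Reading off the displayed matrices, $B$ has nonzero entries only in its top-left $m\times m$ corner, where it is the Hankel (catalecticant) matrix in the $f_k$'s, while $C$ has nonzero entries only in its bottom-right $m\times m$ corner, where it is the Hankel matrix in the $g_k$'s. These two corners overlap in exactly one slot, the central entry $(m,m)$, and elsewhere their supports are disjoint. Consequently $A=B+C$ vanishes identically on the two \emph{anti-corner} rectangles, namely the rows $1,\dots,m-1$ intersected with the columns $m+1,\dots,2m-1$, and the rows $m+1,\dots,2m-1$ intersected with the columns $1,\dots,m-1$. Thus $A$ is \emph{arrow-shaped}: its only genuine coupling is concentrated in the central row $m$ and the central column $m$.

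First I would apply multilinearity of the determinant in the central row. Row $m$ is the honest sum of its $B$-part $(f_m,\dots,f_{2m-1},0,\dots,0)$ and its $C$-part $(0,\dots,0,C_{m,m},\dots,C_{m,2m-1})$; in particular the shared diagonal entry $f_{\bullet}+g_{\bullet}$ splits correctly, its $f$-summand going into the $B$-part and its $g$-summand into the $C$-part. This yields $\det A=\det A^{(C)}+\det A^{(B)}$, where $A^{(C)}$ (resp. $A^{(B)}$) is obtained from $A$ by replacing row $m$ with its $C$-part (resp. $B$-part), leaving all other rows intact.

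Next I would observe that each of these matrices is block triangular for the appropriate partition. In $A^{(C)}$ the rows $m,\dots,2m-1$ all vanish in the columns $1,\dots,m-1$, so partitioning both rows and columns as $\{1,\dots,m-1\}\sqcup\{m,\dots,2m-1\}$ makes the lower-left block zero; the two diagonal blocks are then exactly $B_{\{1,\dots,m-1\}}$ and $C_{\{m,\dots,2m-1\}}$, giving $\det A^{(C)}=(\det B_{\{1,\dots,m-1\}})(\det C_{\{m,\dots,2m-1\}})$ by the block-triangular determinant rule. Symmetrically, in $A^{(B)}$ the rows $1,\dots,m$ all vanish in the columns $m+1,\dots,2m-1$, so partitioning rows and columns as $\{1,\dots,m\}\sqcup\{m+1,\dots,2m-1\}$ makes the upper-right block zero, with diagonal blocks $B_{\{1,\dots,m\}}$ and $C_{\{m+1,\dots,2m-1\}}$; hence $\det A^{(B)}=(\det B_{\{1,\dots,m\}})(\det C_{\{m+1,\dots,2m-1\}})$. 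Adding the two contributions produces the stated identity.

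The only real care needed is bookkeeping: verifying that the off-support entries are genuinely zero, that the split of the central row distributes the overlapping $(m,m)$ entry correctly between the two pieces, and that after the split each piece is truly block triangular (so that the diagonal blocks come out as the four Hankel submatrices named in the statement, of sizes $m-1,m,m,m-1$ summing correctly to $2m-1$). The degenerate case $m=1$, where $A=(f_1+g_1)$ is $1\times 1$, is covered by the usual convention that the determinant of the empty $0\times 0$ matrix equals $1$. I expect no analytic obstacle; the entire argument is a transparent application of multilinearity together with the block-triangular determinant rule.
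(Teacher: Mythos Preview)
Your argument is correct. Splitting row $m$ into its $B$-part and its $C$-part via multilinearity, and then recognizing that each of the two resulting matrices is block triangular for the partition $\{1,\dots,m\}\sqcup\{m+1,\dots,2m-1\}$ (respectively $\{1,\dots,m-1\}\sqcup\{m,\dots,2m-1\}$), is a clean and complete proof of the identity.

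The paper proves the same lemma by expanding $\det A$ directly via the Leibniz formula and observing that, because of the zero blocks, any contributing permutation must send $\{1,\dots,m-1\}$ into $\{1,\dots,m\}$ and $\{m+1,\dots,2m-1\}$ into $\{m,\dots,2m-1\}$; it then groups the surviving terms according to the behaviour at the pivot index $m$ and recombines them into the two products. In essence both proofs hinge on the same structural fact (the only coupling between the $B$-block and the $C$-block is through the central row and column), but your use of row multilinearity followed by the block-triangular determinant rule packages this more transparently and avoids the somewhat delicate bookkeeping of splitting the permutation sum into three pieces and reassembling them into two. The paper's route has the minor advantage of staying entirely within the Leibniz expansion, while yours buys a shorter and more conceptual argument.
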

\begin{proof}
We have that 
$$
\det A = \sum_{\sigma\in S_{2m-1}}\mathrm{sign}\sigma A_{1\sigma_1}\dots A_{(2m-1)\sigma_{2m-1}},
$$
where the entry $A_{i\sigma_i}$ is the entry in row $i$ and column $\sigma_i$. Then we split $\det A$ in the following way
\begin{align*}
\det A =&(\sum_{\sigma\in S_{m-1}}\mathrm{sign}\sigma A_{1\sigma_1}\dots A_{(m-1)\sigma_{m-1}}) A_{m,m}(\sum_{\tau\in S_{m-1}}\mathrm{sign}\tau A_{(m+1)(m+\tau_{1})}\dots A_{(2m-1)(m+\tau_{m-1})})\\
&+ (\sum_{\sigma\in S_{m}, \sigma_m\neq m}\mathrm{sign}\sigma A_{1\sigma_1}\dots A_{m\sigma_{m}}) (\sum_{\tau\in S_{m-1}}\mathrm{sign}\tau A_{(m+1)(m+\tau_{1})}\dots A_{(2m-1)(m+\tau_{m-1})})\\
&+ (\sum_{\sigma\in S_{m-1}}\mathrm{sign}\sigma A_{1\sigma_1}\dots A_{(m-1)\sigma_{m-1}}) (\sum_{\tau\in S_{m}, \tau_1\neq 1}\mathrm{sign}\tau A_{m(m-1+\tau_{1})}\dots A_{(2m-1)(m-1+\tau_{m})})\\
=&(\det B_{\{1,\dots , m-1\}})(\det C_{\{m,\dots ,2m-1\}})+(\det B_{\{1,\dots , m\}})(\det C_{\{m+1,\dots ,2m-1\}}).
\end{align*}
\end{proof}
\begin{theorem}\label{singularConic}
Assume that $X=\{P_1,\dots ,P_s\}$ is  a set of points in $\mathbb{P}^2$ which lie on a conic. Let $A$ be an Artinian Gorenstein quotient of $A(X)$ with dual generator $F=\sum_{i=1}^s\alpha_iL_i^d$, for $d\geq 2\tau(X)$ and $\alpha_i\neq0$ for every $i$. Then $A$ satisfies the SLP.
\end{theorem}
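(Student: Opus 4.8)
The plan is to reduce to the case of a singular conic and then glue two binary-form computations via Lemma \ref{detLemma}. A conic in $\mathbb{P}^2$ is either smooth or singular: the smooth case is Theorem \ref{smoothconic}, since a smooth conic is a rational normal curve of degree two, and if the conic is a double line (or if all points happen to lie on one line) then after a linear change of coordinates $F$ involves only two variables, so $A$ has codimension at most two and has the SLP by the classical codimension-two result quoted in the introduction. Thus I may assume $X$ lies on a genuine pair of distinct lines $L_A,L_B$ meeting at a point $Q$. Choosing coordinates so that $L_A=\{x_2=0\}$, $L_B=\{x_1=0\}$ and $Q=(1:0:0)$, every dual form of a point of $L_A$ lies in $\K[X_0,X_1]$ and every dual form of a point of $L_B$ lies in $\K[X_0,X_2]$, so $F=F_A+F_B$ with $F_A\in\K[X_0,X_1]$, $F_B\in\K[X_0,X_2]$ and $x_1x_2\in\ann_S(F)$. (If $Q\in X$ its dual form is $X_0^d$ and may be absorbed into either summand.)

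By the Maeno--Watanabe criterion it suffices to produce a single linear form $\ell$ with $\det\Hess^j_\ell(F)\neq 0$ for all $0\le j\le\lfloor d/2\rfloor$; equivalently, I will show each $\det\Hess^j(F)$ is a nonzero polynomial in $X_0,X_1,X_2$, for then a generic $\ell$ not vanishing on $X$ avoids the finitely many hypersurfaces $\{\det\Hess^j=0\}$ simultaneously. For $\tau(X)\le j\le\lfloor d/2\rfloor$ both $A_j$ and $A_{d-j}$ agree with $A(X)$ in these degrees by Proposition \ref{h-vector}, and $\times\ell^{d-2j}$ is multiplication by a nonzerodivisor on the reduced ring $A(X)$ between spaces of equal dimension $s$, hence an isomorphism; so these $j$ are automatic, exactly as in Theorem \ref{smoothconic}. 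The remaining work is the range $j<\tau(X)$, where $d-2j\ge 1$.

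For such $j$ the relation $x_1x_2=0$ forces a basis of $A_j$ consisting of an $\{x_0,x_1\}$-strip and an $\{x_0,x_2\}$-strip sharing only the monomial $x_0^j$. With respect to this basis $\Hess^j(F)=B+C$, where $B$ is the Hankel block coming from $F_A$ (supported on the first strip) and $C$ the Hankel block coming from $F_B$ (supported on the second strip together with $x_0^j$), overlapping in the single entry indexed by $x_0^j$. Applying Lemma \ref{detLemma} (together with the identical splitting of permutations when the two strips have unequal length) yields
$$
\det\Hess^j(F)=(\det B')(\det \widetilde C)+(\det B)(\det C'),
$$
where $B,\widetilde C$ are the full Hankel blocks and $B',C'$ are obtained by deleting the shared index; thus $\det B,\det B'\in\K[X_0,X_1]$ are catalecticants of the binary form $F_A$ and $\det\widetilde C,\det C'\in\K[X_0,X_2]$ are catalecticants of $F_B$. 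Crucially the two products interchange the sizes of the $F_A$-factors, so they carry different degrees in $X_1$.

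The main obstacle is to rule out cancellation between these two products, and this is where the geometry enters. I will choose the coordinate line $\{x_0=0\}$ so that the point $(0:1:0)=\{x_0=0\}\cap L_A$ is a general---hence strong Lefschetz---point for the binary Artinian Gorenstein algebra $A_A=\K[x_0,x_1]/\ann_S(F_A)$ at every level $j<\tau(X)$, which is possible because sliding $\{x_0=0\}$ realizes $(0:1:0)$ as an arbitrary point of $L_A$ and codimension-two algebras enjoy the SLP. Then $\det\Hess^j_{(0:1:0)}(F_A)\neq0$, which says precisely that $\det B$ attains its maximal $X_1$-degree $r_A(d-2j)$, with $r_A=\dim_\K (A_A)_j$. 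Since $\det B'$ has strictly smaller $X_1$-degree and the $F_B$-factors are free of $X_1$, the top $X_1$-degree part of $\det\Hess^j(F)$ equals $\det\Hess^j_{(0:1:0)}(F_A)\cdot\det C'$, a nonzero scalar multiple of the full-rank catalecticant $\det C'$ of $F_B$; hence $\det\Hess^j(F)\not\equiv0$ and the proof concludes. I expect the delicate points to be the bookkeeping for the unbalanced block sizes in the extension of Lemma \ref{detLemma} and the verification that the surviving factor $\det C'$ is itself a nonzero catalecticant of the correct size, which again follows from the binary-form theory.
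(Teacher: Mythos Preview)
Your approach is essentially the paper's: reduce to the singular-conic case, split $F=F_A+F_B$ along the two lines, use the monomial basis with the single shared monomial, apply Lemma~\ref{detLemma} to write $\det\Hess^j(F)$ as a sum of two products of binary-form Hessians, and invoke codimension-two SLP to see that each factor is nonzero. The one substantive difference is how you rule out cancellation between the two products.

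The paper argues by unique factorization: if the two products cancelled, then comparing factors in $\K[X_0,X_2]$ versus $\K[X_1,X_2]$ forces $X_2^{d-2j}$ to divide both $\det\Hess^j(F_1)$ and $\det\Hess^j(F_2)$; they then invoke Lemma~\ref{HessLemma} to see this forces $j+1=s_1=s_2$ and derive a contradiction with $j\le\tau(X)-1$. Your argument instead spends the remaining coordinate freedom: you slide the line $\{x_0=0\}$ so that $(0{:}1{:}0)$ is a strong Lefschetz point for $A_A$, whence $\det B$ attains its maximal $X_1$-degree $(j{+}1)(d{-}2j)$, strictly larger than $\deg_{X_1}(\det B')\le j(d{-}2j)$; since the $F_B$-factors carry no $X_1$, the leading $X_1$-coefficient of $\det\Hess^j(F)$ is $\det\Hess^j_{(0:1:0)}(F_A)\cdot\det C'$. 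This is a clean way to avoid the UFD case analysis. One small imprecision: $\det C'$ is not literally a catalecticant of $F_B$ but rather $\det\Hess^{j-1}(x_2^2\circ F_B)$ (this is exactly the identification $D^j_{\{1,\dots,j\}}=\Hess^{j-1}(F_2')$ the paper makes), and its nonvanishing uses that $x_2^2\circ F_B$ still has enough distinct power summands---which is fine in the ``balanced'' situation the paper is also implicitly working in, but you should state it.

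A caution on the ``unequal strips'' remark: the paper, like you, tacitly uses that for $j<\tau(X)$ the $2j{+}1$ monomials $\{x_0^j,\dots,x_1^j\}\cup\{x_0^{j-1}x_2,\dots,x_2^j\}$ form a basis of $A_j$, i.e.\ $h_A(j)=2j{+}1$. If the points are very unevenly split between the two lines this can fail, and then neither Lemma~\ref{detLemma} nor your $X_1$-degree argument applies verbatim. Your parenthetical about extending the permutation split is the right instinct, but be aware this is a genuine extra case not handled in the paper's proof either.
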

\begin{proof}
If $X$ lies on a smooth conic applying Theorem \ref{smoothconic} for $n=2$ we get the desired result. 
Now suppose that  $X$ consists of points on a singular conic that is a union of two lines in $\mathbb{P}^2$. Suppose that $X_1:=\{P_1,\dots , P_{s_1}\}$ is a subset of $X$ which lies on one line and $X_2:=\{Q_{1},\dots , Q_{s_2}\}$ is a subset of $X$ with the points on the other line, so $X=X_1\cup X_2$. If $X_1\cap X_2=\emptyset$ then $s_1+s_2=s$ otherwise $s_1+s_2-1=s$. Denote by $L_i$ the linear form dual to $P_i$ for $1\leq i\leq s_1$ and by $K_i$ the linear form dual to $Q_i$ for each $1\leq i\leq s_2$. Let $F_1 =\sum^{s_1}_{i=1} a_iL_i^d $ and $F_2= \sum^{s_2}_{i=1} b_iK_i^d$ for linear forms $L_i$ and $K_i$ where $F=F_1+F_2$. By linear change of coordinates we may assume that $L_i=u_{0,i}X_0+u_{2,i}X_2$ and  $K_i=v_{1,i}X_1+ v_{2,i}X_2$ such that $u_{0,i},u_{2,i},v_{0,i},v_{2,i}\in \sf k$ for every $i$. The Hilbert function of $A$ is equal to 
$$
h_A = \left(1,3,5,\dots, 2k+1,s,\dots ,s, 2k+1, \dots , 5, 3,1\right),
$$
where $k$ is the largest integer such that $2k+1\leq s$. If $s=2k+1$ then $\tau(X)=k$ and otherwise $\tau(X)=k+1$. Let $j$ be an integer such that $1\leq j\leq \tau(X)-1$.
 Consider the following ordered monomial basis for $A$ in degree $j$ 
$$\B_j=\{x_0^j,x_0^{j-1}{x_2},\dots , x_0x_2^{j-1},x_2^j,x_2^{j-1}x_1,\dots , x_2x_1^{j-1},x_1^{j} \}.$$ The $j$-th Hessian of $F$ with respect to $\B_j$ is the following matrix
\begin{align*}
\Hess^j(F) &= \Hess^j(F_1)+\Hess^j(F_2) = \sum^{s_1}_{i=1} a_i\Hess^j(L_i)+\sum^{s_2}_{i=1} b_i\Hess^j(K_i)\\
& = \begin{pmatrix}
C^j_0&C^j_1&\dots & C^j_j &\cdots &0&0\\
C^j_1&C^j_2&\dots & C^j_{j+1}&\cdots &0&0\\
\vdots &\vdots && \vdots  &&\vdots &\vdots \\
C^j_j & C^j_{j+1} &\dots & C^j_{2j}+D^j_{2j}  &\dots & D^j_{j+1} &D^j_j\\
\vdots &\vdots && \vdots  &&\vdots &\vdots \\
0&0&\dots & D^j_{j+1}  &\dots & D^j_{2} &D^j_1\\
0&0&\dots & D^j_{j}  &\dots & D^j_{1} &D^j_0\\
\end{pmatrix}
\end{align*}
where we set $C^j_i = ({x_0^{j-i}x_2^{i}})\circ  F_1$ and $D^j_i =({x_1^{j-i}x_2^{i}})\circ F_2$ for each $i=0,\dots ,j$.\par 
\noindent Then using Lemma \ref{detLemma} we get that
\begin{equation}\label{HessDecomposition}
\det \Hess^j(F) = (\det C^j_{\{0,\dots ,j-1\}}) (\det D^j) +(\det D^j_{\{1,\dots ,j\}}) (\det C^j),
\end{equation}
where we set $C^j = \begin{pmatrix}
C^j_0&C^j_1&\dots & C^j_j\\
C^j_1&C^j_2&\dots & C^j_{j+1}\\
\vdots &\vdots && \vdots \\
C^j_j & C^j_{j+1} &\dots & C^j_{2j}
\end{pmatrix}
$ and $D^j = \begin{pmatrix}
D^j_{2j}&D^j_{2j-1}&\dots & D^j_j\\
D^j_{2j-1}&D^j_{2j-2}&\dots & D^j_{j-1}\\
\vdots &\vdots && \vdots \\
D^j_j & D^j_{j-1} &\dots & D^j_{0}
\end{pmatrix}$ and we denote by  $C^j_{\{i_1,\dots ,i_r\}}$ the  square submatrix of $C^j$ of size $r$ with rows and columns $i_1,\dots ,i_r$, similarly for $D^j$. \par 
\noindent  Let $A_1$ and $A_2$ be Artinian Gorenstein quotients  of $A(X_1)={\sf k}[x_0,x_2]/I(X_1)$ and $A(X_2) = {\sf k}[x_1,x_2]/I(X_2)$ with dual generators $F_1$ and $F_2$ respectively. We observe that $C^j = \Hess^j(F_1)$ and $D^j = \Hess^j(F_2)$. Since every Artinian algebra of codimension two has the SLP we have that $\det C^j\neq 0$ and $\det D^j\neq 0$.\par 
\noindent We set 
\begin{align*}
F^\prime_1 :=  x_0^2\circ F_1, \quad F^\prime_2 := x_1^2\circ F_2.
\end{align*}
Then $C^j_{\{0,\dots ,j-1\}}$ is equal to the $(j-1)$-th Hessian of $F^\prime_1$ with respect to the ordered basis $\{x_0^{j-1}, x_0^{j-2}x_2,\dots ,x_2^{j-1}\}$. Similarly, $D^j_{1,\dots , j} = \Hess^{j-1}(F^\prime_2)$ with respect to \begin{small}
$\{x_2^{j-1}, x_2^{j-2}x_1,\dots ,x_1^{j-1}\}$.
\end{small} So using the result that Artinian algebras in codimension two have the SLP we get that 
$$\det\Hess^{j-1}(F^\prime_1) = \det C^j_{\{0,\dots ,j-1\}}\neq 0,\quad \text{and}\quad \det \Hess^{j-1}(F^\prime_2) = \det D^j_{\{1,\dots ,j\}}\neq 0.\quad $$ 
Therefore, Equation (\ref{HessDecomposition}) is equivalent to 
\begin{equation}
\det \Hess^j(F) = (\det \Hess^{j-1}(F^\prime_1)) (\det \Hess^j(F_2)) +(\det \Hess^{j-1}(F^\prime_2)) (\det \Hess^j(F_1)).
\end{equation}
Note that assuming $d\geq 2\tau(X)$ and $1\leq j\leq \tau(X)-1$ implies that 
$$\deg(\det \Hess^{j-1}(F^\prime_1))<\deg (\det \Hess^j(F_1)),\quad \deg(\det \Hess^{j-1}(F^\prime_2))<\deg (\det \Hess^j(F_2)).$$
Therefore, $\det\Hess^j(F)\neq0$ unless when $X_2^{d-2j}$ is a factor of both $\det\Hess^j(F_1)$ and $\det\Hess^j(F_2)$ so we must have $X_1\cap X_2\neq \emptyset$ and  $s=s_1+s_2-1$. On the other hand, using Lemma \ref{HessLemma} we get that $\det\Hess^j(F_1)$ is in fact a non-zero monomial in $L_i$'s and $j=s_1$. Similarly, we get $j=s_2$. So $s=2s_1-1=2s_2-1=2k+1$ and therefore $\tau(X)=k$ and $s_1=s_2=k+1=\tau(X)+1$. This contradicts the assumption that $j\leq \tau(X)-1$.\par 
For each $\tau(X)\leq j\leq \lfloor\frac{d}{2}\rfloor$ the $j$-the Hessian of $F$ corresponds to the multiplication map on $A(X)$ and then trivially has maximal rank for general enough linear forms. \par Note that $\det\Hess^0(F)=F\neq 0$. Therefore, we have proved that there is a linear form $\ell$ such that $\det\Hess_\ell^j(F)\neq 0$ for every $0\leq j\leq \lfloor\frac{d}{2}\rfloor$ and equivalently $A$ has the SLP.
\end{proof}
We now prove that if $X\subseteq \mathbb{P}^2$ contains points on a conic then higher Hessians of $F$ of high enough order are non-zero. First we set a notation that for every $i\geq 0$ the subscript of the entry $ \Delta h_A (i) = h_i-h_{i-1}$  is denoted by $i$.
\begin{theorem}\label{points-on-conic}
Let $X=\{P_1,\dots ,P_s\}$ be a set of points in $\mathbb{P}^2$ and $A$ be an Artinian Gorenstein quotient of $A(X)$ with dual generator $F=\sum_{i=1}^s\alpha_iL_i^d$, for $d\geq 2\tau(X)$ and $\alpha_i\neq 0$ for every $i$. Suppose that the first difference of $h_A$ is equal to 
$$\Delta h_A = (1,2,h_2-3,\dots , 2_k,\dots ,2_{\tau(X)}),$$
for some $1\leq k< \tau(X)$. Then there is a linear form $\ell$ such that for every $k-1\leq j\leq \lfloor\frac{d}{2}\rfloor$
$$\det\Hess^j_\ell(F)\neq 0.$$

\end{theorem}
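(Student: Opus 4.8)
The plan is to split the asserted range at $\tau(X)$. For $\tau(X)\le j\le\lfloor\frac{d}{2}\rfloor$ the argument is the routine one already used in Theorems \ref{smoothconic} and \ref{singularConic}: here $h_A(j)=s$, and since $d\ge 2\tau(X)$ we have $d-j\ge\lceil\frac{d}{2}\rceil\ge\tau(X)$, so for a general $\ell$ with $\ell\circ L_i\neq 0$ for all $i$ the map $\times\ell^{d-2j}\colon A_j\to A_{d-j}$ is the multiplication on the Cohen--Macaulay ring $A(X)$ between two degrees in which a general linear form is a nonzerodivisor; both sides have dimension $s$, so it is an isomorphism. By the Maeno--Watanabe criterion this is equivalent to $\det\Hess^j_\ell(F)\neq 0$. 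Everything therefore reduces to the range $k-1\le j\le\tau(X)-1$, where $h_A(j)=h_{A(X)}(j)=s-2(\tau(X)-j)$ by Proposition \ref{h-vector} and the flatness $\Delta h_A(i)=2$ for $k\le i\le\tau(X)$.

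First I would extract the geometry. Because $\Delta h_A(i)=2$ for all $k\le i\le\tau(X)$, Davis's Theorem \ref{davis} applied to the pair $(k,k+1)$ with $r=2$ writes $X=X_1\sqcup X_2$ with $X_1$ lying on a conic $Q=0$ and $\Delta h_{A(X_2)}=(\Delta h_A(2)-2,\dots,\Delta h_A(k-1)-2)$ (the smallest generator degree satisfies $t\le 2\le k$ once $k\ge 2$; if $k=1$ then $X$ itself lies on a conic and Theorems \ref{smoothconic} and \ref{singularConic} already give the SLP). The decisive consequence, and the reason the hypothesis $j\ge k-1$ appears, is that $\Delta h_{A(X_2)}$ is supported in degrees $\le k-1$, whence $\tau(X_2)\le k-1$ and $h_{A(X_2)}(j)=|X_2|=:s_2$ for every $j\ge k-1$: the residual points are already saturated throughout the remaining range. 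I would also record the apolarity identity $Q\circ L_i^d=d(d-1)\,Q(P_i)\,L_i^{d-2}=0$ for each $P_i\in X_1$, so that $Q$ annihilates $F_1:=\sum_{P_i\in X_1}\alpha_iL_i^d$; this is the algebraic incarnation of ``$X_1$ lies on a conic'' that the proof must exploit.

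Next I would invoke Lemma \ref{HessLemma} to write $\det\Hess^j(F)=\sum_{|\mathcal{I}|=h_A(j)}c_{\mathcal{I}}\prod_{i\in\mathcal{I}}\alpha_iL_i^{d-2j}$, where $c_{\mathcal{I}}\neq 0$ precisely when $\{L_i^j\}_{i\in\mathcal{I}}$ is a basis of the degree-$j$ piece $\langle L_1^j,\dots,L_s^j\rangle$ of the inverse system, i.e. when $\mathcal{I}$ is a basis of the matroid carried by the powers $L_i^j$. Since that span has dimension $h_A(j)$, such an $\mathcal{I}$ exists, so the expansion is nonzero as a polynomial in $\K[\alpha,L]$; the whole problem is thus to show that, for the fixed nonzero scalars $\alpha_i$, the specialized polynomial $\det\Hess^j(F)\in\K[X_0,X_1,X_2]$ does not vanish identically, after which a general point $P$ (equivalently a general $\ell$) avoids its zero locus and $\det\Hess^j_\ell(F)\neq 0$.

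The hard part is exactly ruling out cancellation among the surviving monomials $\prod_{i\in\mathcal{I}}L_i^{d-2j}$. For corank one and two this is the unique-factorization argument of Propositions \ref{tophess-s-1} and \ref{tophess-s-2}: one isolates the basis omitting one (resp. two) points and evaluates on the common zero locus of the omitted linear forms --- a point, resp. the line through the two omitted points --- on which every other term vanishes while the isolated term survives. For corank $2(\tau(X)-j)\ge 3$ this breaks down, since three or more of the $L_i$ have no common zero in $\mathbb{P}^2$, and this is the genuine obstacle. My plan to overcome it is to let the conic supply the missing vanishing locus: after a change of coordinates adapted to $Q$ (splitting it into two lines and using Theorem \ref{singularConic} in the singular case, or passing through the degree-two Veronese $\mathbb{P}^1$ as in Theorem \ref{smoothconic} in the smooth case), I would choose a basis of $A_j$ putting $\Hess^j(F)$ in the bordered form of Lemma \ref{detLemma}, with the $X_1$-block equal to $\Hess^j(F_1)$. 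That block is the non-degenerate Hankel/catalecticant of distinct points of $\mathbb{P}^1$, so its relevant minors do not vanish, while the saturation $h_{A(X_2)}(j)=s_2$ makes the complementary $X_2$-block a full-rank catalecticant. A degree comparison between the two resulting products --- exactly as in Theorem \ref{singularConic}, where $d\ge 2\tau(X)$ forces the strict inequality $\deg\det\Hess^{j-1}(F_1')<\deg\det\Hess^j(F_1)$ --- then prevents the two contributions from cancelling, yielding $\det\Hess^j(F)\neq 0$ and finishing the range $k-1\le j\le\tau(X)-1$.
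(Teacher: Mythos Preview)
Your geometric setup via Davis's theorem is correct and matches the paper: $X=X_1\sqcup X_2$ with $|X_1|=2\tau(X)+1$ points on a conic and $X_2$ saturated in degree $j\ge k-1$. The reduction to Theorem~\ref{singularConic} for the conic piece is also the right idea. But your execution plan in the last paragraph goes astray.

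Lemma~\ref{detLemma} is the wrong tool here. Its bordered form, with two $m\times m$ blocks sharing one row/column, arose in Theorem~\ref{singularConic} only because the two \emph{lines} forced $F_1\in\K[X_0,X_2]$ and $F_2\in\K[X_1,X_2]$, so the monomial basis created the overlap at $x_2^j$. For a (possibly smooth) conic together with an arbitrary residual set $X_2$, no change of coordinates produces that overlap, and the Veronese pullback you mention cannot see $X_2$ at all. In fact, if one chooses $\mathcal{B}_j^{(1)}\subset I(X_2)_j$ and $\mathcal{B}_j^{(2)}\subset I(X_1)_j$ mapping to bases of the two summands of $A_j$ (which exist because $h_{A(X)}(j)=h_{A(X_1)}(j)+h_{A(X_2)}(j)$), the Hessian becomes block-\emph{diagonal}, not bordered, and there is nothing to cancel; the degree comparison is then superfluous. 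So your plan as written does not go through, though a corrected block-diagonal version would.

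The paper avoids this entirely by pushing Lemma~\ref{HessLemma} one step further than you did. Since points on a conic contribute at most $2j+1$ to the Hilbert function in degree $j$, any $\mathcal{I}$ with $c_{\mathcal{I}}\neq 0$ and $|\mathcal{I}|=h_{A(X)}(j)=(2j+1)+s_2$ must contain \emph{all} of $X_2$. Hence $\prod_{i\in X_2}\alpha_iL_i^{d-2j}$ is a common factor of every surviving term, and what remains after factoring it out is precisely $\det\Hess^j(F_1)$ for the conic points alone. Theorem~\ref{singularConic} then finishes immediately. This is both shorter and avoids any basis gymnastics.
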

\begin{proof}
Since $\Delta h_{A(X)}$ is flat, the Theorem \ref{davis} due to E. D. Davis \cite{Davis} implies that $X$ is a disjoint union of $2\tau(X)+1$ points on a conic and $s-2\tau(X)-1$ other points.  We may assume that $P_1,\dots , P_{s-2\tau(X)-1}$ lie outside the conic. Note that for each $k-1\leq j\leq \lfloor\frac{d}{2}\rfloor$ we have that $h_{A(X)}(j) = 2j+1+s-2\tau(X)-1=s-2\tau(X)+2j$.  Using Lemma \ref{HessLemma} we get that for each $k-1\leq j\leq \lfloor\frac{d}{2}\rfloor$
\begin{equation}\label{iff}
\det \Hess^j(F) =\sum_{\mathcal{I}\subseteq{\{1,\dots ,s\}}, \vert\mathcal{I}\vert = h_{A(X)}(j)}c_{\mathcal{I}}\prod_{i\in\mathcal{I}}\alpha_iL_i^{d-2j},
\end{equation}
where  $c_{\mathcal{I}}\neq0$ if and only if $h_{A(X_\mathcal{I})}(j)=h_{A(X)}(j) = s-2\tau(X)+2j$.  Notice that the Hilbert function of the coordinate ring of the points on a conic in degree $j$ is at most $2j+1$. Therefore, $c_{\mathcal{I}}\neq 0$ if and only if $\mathcal{I}$ contains $s-2\tau(X)+2j-(2j+1) = s-2\tau(X)-1$  points off the conic that means $\{1,\dots ,s-2\tau(X)-1\}\subset \mathcal{I}$. \par 
\noindent This implies that $\prod_{i=1}^{s-2\tau(X)-1}\alpha_iL^{d-2j}_i$ is a common factor of the right hand side of Equation (\ref{iff}), so 
\begin{equation}\label{factoriff}
\det \Hess^j(F) = \prod_{i=1}^{s-2\tau(X)-1}\alpha_iL^{d-2j}_i\left(\sum_{\mathcal{I}\subseteq{\{s-2\tau(X),\dots ,s\}}, \vert\mathcal{I}\vert = 2j+1}c_{\mathcal{I}}\prod_{i\in\mathcal{I}}\alpha_iL_i^{d-2j}\right).
\end{equation}
Let $Y := \{P_{s-2\tau(X)},\dots ,P_s\}$ be the subset of $X$ which lies on a conic. Consider the Artinian Gorenstein quotient of $A(Y)$ with dual generator $G=\sum_{i=s-2\tau(X)}^{s}\alpha_iL_i^d$. 
Theorem \ref{singularConic} implies that $B$ has the SLP. Equivalently, for every $0\leq j\leq \lfloor\frac{d}{2}\rfloor$
$$\det\Hess^j(G) = \sum_{\mathcal{I}\subseteq{\{s-2\tau(X),\dots ,s\}}, \vert\mathcal{I}\vert = 2j+1}c_{\mathcal{I}}\prod_{i\in\mathcal{I}}\alpha_iL_i^{d-2j}\neq 0.$$
 This implies that the polynomial in Equation (\ref{factoriff}) is non-zero and this completes the proof.
\end{proof}
Similarly, using that all Artinian algebras in codimension two have the SLP we have the following which proves non-vanishing of some of higher Hessians in the case where $X\subseteq \mathbb{P}^2$ contains points on a line. 
\begin{theorem}\label{points-on-line}
Let $X=\{P_1,\dots ,P_s\}$ be a set of points in $\mathbb{P}^2$ and $A$ be an Artinian Gorenstein quotient of $A(X)$ with dual generator $F=\sum_{i=1}^s\alpha_iL_i^d$, for $d\geq 2\tau(X)$ and $\alpha_i\neq 0$ for every $i$. Suppose that the first difference of $h_A$ is equal to 
$$\Delta h_A = (1,2,h_2-3,\dots , 1_k,\dots ,1_{\tau(X)}),$$
for some $1\leq k<\tau(X)$. Then there is a linear form $\ell$ such that for every $k-1\leq j\leq \lfloor\frac{d}{2}\rfloor$
$$\det\Hess^j_\ell(F)\neq 0.$$
\end{theorem}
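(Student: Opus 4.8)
The plan is to mimic the proof of Theorem \ref{points-on-conic}, replacing the conic by a line and the input Theorem \ref{singularConic} by the classical fact that every Artinian Gorenstein algebra of codimension two (in characteristic zero) has the SLP. First I would feed the flat stretch of $\Delta h_A$ into Davis' Theorem \ref{davis}: since $\Delta h_{A(X)}(i)=1$ for all $k\le i\le \tau(X)$, the theorem (with $r=1$) writes $X$ as a disjoint union $X=Y\sqcup Z$, where $Y$ consists of the points lying on a line $\mathcal L$ and $Z$ collects the rest, with $\Delta h_{A(Z)}(i)=\Delta h_A(i)-1$ in the relevant range. Counting forces $|Y|=\tau(X)+1$ (so that $\tau(Y)=\tau(X)$) and $|Z|=s-\tau(X)-1$; after relabelling I may assume $Z=\{P_1,\dots,P_{s-\tau(X)-1}\}$ and $Y=\{P_{s-\tau(X)},\dots,P_s\}$. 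Reading off the flat part of the $h$-vector then gives $h_{A(X)}(j)=s-\tau(X)+j$ for every $k-1\le j\le \tau(X)$.

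I would then split the range of $j$. For $\tau(X)\le j\le\lfloor d/2\rfloor$ one has $h_A(j)=h_A(d-j)=s$, and $\Hess^j_\ell(F)$ is the matrix of the multiplication $\times\ell^{d-2j}\colon A(X)_j\to A(X)_{d-j}$ between stabilized pieces, which has maximal rank for a general $\ell$; this is the same argument already used in Theorems \ref{smoothconic} and \ref{singularConic}. The substantive range is $k-1\le j\le\tau(X)-1$, where I would apply Lemma \ref{HessLemma} to write $\det\Hess^j(F)=\sum_{|\mathcal{I}|=h_{A(X)}(j)}c_{\mathcal{I}}\prod_{i\in\mathcal{I}}\alpha_iL_i^{d-2j}$, with $c_{\mathcal{I}}\ne 0$ exactly when $h_{A(X_{\mathcal{I}})}(j)=h_{A(X)}(j)$.

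The heart of the argument is to show that $c_{\mathcal{I}}\ne0$ forces $Z\subseteq\mathcal{I}$. Since any subset of points on $\mathcal L$ has Hilbert function at most $j+1$ in degree $j$ (its coordinate ring being a quotient of a polynomial ring in two variables), subadditivity of Hilbert functions over a disjoint union of point sets gives $h_{A(X_{\mathcal{I}})}(j)\le (j+1)+|X_{\mathcal{I}}\cap Z|$. For this to reach $h_{A(X)}(j)=(j+1)+|Z|$ one needs $|X_{\mathcal{I}}\cap Z|=|Z|$, i.e.\ $Z\subseteq\mathcal{I}$; together with $|\mathcal{I}|=h_{A(X)}(j)$ this also forces $|\mathcal{I}\cap Y|=j+1$. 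Hence $\prod_{i\in Z}\alpha_iL_i^{d-2j}$ is a common factor and $\det\Hess^j(F)=\bigl(\prod_{i\in Z}\alpha_iL_i^{d-2j}\bigr)\sum_{\mathcal{J}\subseteq Y,\,|\mathcal{J}|=j+1}c_{Z\cup\mathcal{J}}\prod_{i\in\mathcal{J}}\alpha_iL_i^{d-2j}$.

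Finally I would identify the remaining cofactor, via Lemma \ref{HessLemma} applied to $G=\sum_{i\in Y}\alpha_iL_i^d$, with $\det\Hess^j(G)$, where $G$ is the dual generator of an Artinian Gorenstein quotient $B$ of $A(Y)$: the nonvanishing pattern of the $c_{Z\cup\mathcal{J}}$ matches that of the coefficients coming from $G$, since both are governed by the condition $h_{A(Y_{\mathcal{J}})}(j)=j+1$. Because $Y$ lies on a line, $B$ has codimension two, and every codimension-two Artinian Gorenstein algebra has the SLP; so by the Maeno--Watanabe criterion $\det\Hess^j(G)$ is not the zero polynomial for any $j\le\tau(Y)-1=\tau(X)-1$. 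As distinct index sets $\mathcal{J}$ produce distinct monomials in the $\alpha_i$, there is no cancellation and $\det\Hess^j(F)$ is a nonzero polynomial for every $k-1\le j\le\lfloor d/2\rfloor$; a general $\ell$ then lies off all the corresponding hypersurfaces simultaneously, yielding $\det\Hess^j_\ell(F)\ne0$ for all these $j$. The main obstacle I anticipate is the bookkeeping in the combinatorial step — extracting the exact values $|Y|=\tau(X)+1$, $|Z|=s-\tau(X)-1$ and $h_{A(X)}(j)=s-\tau(X)+j$ from Davis' theorem, verifying that the subadditivity bound is tight precisely when $Z\subseteq\mathcal{I}$, and checking that the cofactor is legitimately a genuine codimension-two Hessian to which the SLP of codimension two applies.
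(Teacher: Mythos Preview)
Your proposal is correct and follows essentially the same route as the paper: Davis' theorem with $r=1$ to isolate the $\tau(X)+1$ collinear points, Lemma~\ref{HessLemma} to expand $\det\Hess^j(F)$, the observation that $c_{\mathcal I}\ne 0$ forces all off-line points into $\mathcal I$, factoring out the common product, and identifying the cofactor with the Hessian of the codimension-two algebra $B$ coming from the collinear points. If anything, your write-up is more careful than the paper's in two places: you explicitly treat the range $\tau(X)\le j\le\lfloor d/2\rfloor$ via the multiplication map on $A(X)$, and you justify the identification of the cofactor with $\det\Hess^j(G)$ by the no-cancellation argument in the $\alpha_i$'s rather than asserting equality of the two sums.
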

\begin{proof}
Using Theorem \ref{davis} we get that there are exactly $\tau(X)+1$ points on a line and $s-\tau(X)-1$ off the line. We may assume that $P_1,\dots , P_{s-\tau(X)-1}$ lie off the line. For each $k-1\leq j\leq \lfloor\frac{d}{2}\rfloor$ we have $h_{A(X)}(j)=j+1+s-\tau(X)-1=s-\tau(X)+j $.\par 
\noindent So for each $s-k\leq j\leq \lfloor\frac{d}{2}\rfloor$ by Lemma \ref{detLemma} we get
\begin{equation}\label{iff1}
\det \Hess^j(F) =\sum_{\mathcal{I}\subseteq{\{1,\dots ,s\}}, \vert\mathcal{I}\vert = h_{A(X)}(j)}c_{\mathcal{I}}\prod_{i\in\mathcal{I}}\alpha_iL_i^{d-2j},
\end{equation}
where  $c_{\mathcal{I}}$ is non-zero if and only if $h_{A(X_\mathcal{I})}(j) =h_{A(X)}(j)=s-\tau(X)+j$.  Since the Hilbert function of the coordinate ring of the points on a line in degree $j$ is at most $j+1$, in order for the coordinate ring of $\{P_i\}_{i\in \mathcal{I}}$ to have the Hilbert function equal to $s-\tau(X)+j$ in degree $j$, $\mathcal{I}$ must contain all the indices from $1$ to $s-\tau(X)+j-(j+1) = s-\tau(X)-1$. \par 
\noindent This implies that
\begin{equation}\label{factoriff1}
\det \Hess^j(F) =\prod_{i=1}^{s-\tau(X)-1}\alpha_iL^{d-2j}_i\left(\sum_{\mathcal{I}\subseteq{\{s-\tau(X),\dots ,s\}}, \vert\mathcal{I}\vert = j+1}c_{\mathcal{I}}\prod_{i\in\mathcal{I}}\alpha_iL_i^{d-2j}\right).
\end{equation}
Denote by $Y:=\{P_{s-\tau(X)},\dots ,P_s\}$ the points in $X$ which lie on a line. Consider the Artinian Gorenstein quotient of $A(Y)$ with dual generator $G=\sum_{i=s-\tau(X)}^{s}\alpha_iL_i^d$ and denote it by $B$. Since $B$ is an Artinian algebra of codimension two it satisfies the SLP.  
 Equivalently, for every $0\leq j\leq \lfloor\frac{d}{2}\rfloor$
$$\det\Hess^j(G) = \sum_{\mathcal{I}\subseteq{\{s-\tau(X),\dots ,s\}}, \vert\mathcal{I}\vert = j+1}c_{\mathcal{I}}\prod_{i\in\mathcal{I}}\alpha_iL_i^{d-2j}\neq 0$$
 This implies that $\det\Hess^j(F)\neq 0$ for every $k-1\leq j\leq \lfloor\frac{d}{2}\rfloor$.
\end{proof}
As a consequence of Theorems \ref{points-on-conic} and \ref{points-on-line} we provide a family of Artinian Gorenstein quotients of $X\subseteq \mathbb{P}^2$ satisfying the SLP. 
\begin{corollary}\label{corSLP}
Let $X=\{P_1,\dots ,P_s\}$ be a set of points in $\mathbb{P}^2$ and $A$ be any Artinian Gorenstein quotient of $A(X)$ with dual generator $F=\sum_{i=1}^s\alpha_iL_i^d$, for $d\geq 2\tau(X)$. Then $A$ satisfies the SLP if $\Delta h_A$ is  equal to one the following vectors 
\begin{align}\label{11}
(1,2,\underbrace{1,\dots ,1}_m), \quad (1,2,2,\underbrace{1,\dots ,1}_m),\quad (1,2,3,\underbrace{1,\dots ,1}_m),
\end{align}
\begin{align}\label{22}
(1,\underbrace{2,\dots ,2}_m),\quad (1,2,3,\underbrace{2,\dots ,2}_m),
\end{align}
for some $m\geq 2$. 
\end{corollary}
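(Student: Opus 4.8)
The plan is to apply the criterion of Maeno and Watanabe: since $X\subset\mathbb P^2$, it suffices to exhibit one linear form $\ell$ with $\det\Hess^j_\ell(F)\neq 0$ for every $0\le j\le\lfloor d/2\rfloor$, and I will get such an $\ell$ as a general point. Concretely, for the fixed coefficients $\alpha_i$ (all nonzero, as forced by Proposition \ref{h-vector} once $\Delta h_A$ is prescribed) I would argue that each $\det\Hess^j(F)\in\mathsf k[X_0,X_1,X_2]$ is a nonzero form; each such non-vanishing cuts out a dense Zariski-open set of admissible $\ell$, and a finite intersection of dense opens over the infinite field $\mathsf k$ is again dense, hence nonempty. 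The five vectors split according to the flat tail of $\Delta h_A$: those in \eqref{11} terminate in $1$'s and are governed by Theorem \ref{points-on-line}, while those in \eqref{22} terminate in $2$'s and are governed by Theorem \ref{points-on-conic}.

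First I would read off, for each vector, the index $k$ where the flat tail begins, since the relevant theorem then yields $\det\Hess^j(F)\neq 0$ for all $k-1\le j\le\lfloor d/2\rfloor$. For $(1,2,\underbrace{1,\dots,1}_m)$ one has $k=2$ and for $(1,\underbrace{2,\dots,2}_m)$ one has $k=1$, so Theorems \ref{points-on-line} and \ref{points-on-conic} already settle all $j\ge 1$, respectively all $j\ge 0$, and nothing further is needed. The remaining three vectors $(1,2,2,\underbrace{1,\dots,1}_m)$, $(1,2,3,\underbrace{1,\dots,1}_m)$ and $(1,2,3,\underbrace{2,\dots,2}_m)$ all have $k=3$; there the theorems cover $j\ge 2$ and leave exactly the two low Hessians $j=0$ and $j=1$ to be handled directly.

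The Hessian $\det\Hess^0(F)=F$ is nonzero, so $j=0$ is immediate. For $j=1$ I would use that in each of the three outstanding vectors $h_A(1)=3=\dim_{\mathsf k}S_1$, so that $A_1=S_1$ and the points of $X$ span $\mathbb P^2$. Lemma \ref{HessLemma} then gives
$$
\det\Hess^1(F)=\sum_{\mathcal I\subseteq\{1,\dots,s\},\ |\mathcal I|=3}c_{\mathcal I}\prod_{i\in\mathcal I}\alpha_iL_i^{d-2},
$$
with $c_{\mathcal I}\neq 0$ precisely when the triple $\{P_i\}_{i\in\mathcal I}$ is noncollinear. Since $X$ spans $\mathbb P^2$ such a triple exists, so at least one coefficient survives and $\det\Hess^1(F)$ is a nonzero element of $\mathsf k[\alpha_1,\dots,\alpha_s][X_0,X_1,X_2]$. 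Granting that this survives the specialization to the given numerical $\alpha_i$, one obtains $\det\Hess^1_\ell(F)\neq 0$ for general $\ell$, and intersecting with the open sets from Theorem \ref{points-on-line} or \ref{points-on-conic} produces the required strong Lefschetz element.

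The main obstacle is precisely this specialization step: upgrading the non-vanishing of $\det\Hess^1(F)$ in the free variables $\alpha_i$ to its non-vanishing at the specific nonzero $\alpha_i$ attached to $A$. Since each squarefree monomial $\prod_{i\in\mathcal I}\alpha_i$ comes from a unique $\mathcal I$, the specialized determinant equals $\sum_{c_{\mathcal I}\neq0}(c_{\mathcal I}\prod_{i\in\mathcal I}\alpha_i)\prod_{i\in\mathcal I}L_i^{d-2}$ with every displayed coefficient nonzero, so the issue reduces to the $\mathsf k$-linear independence of the products $\{\prod_{i\in\mathcal I}L_i^{d-2}:c_{\mathcal I}\neq0\}$ inside $\mathsf k[X_0,X_1,X_2]_{3(d-2)}$. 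I expect this to hold because the exponent $d-2\ge 2\tau(X)-2$ is large compared with the number of points, and I would establish it either directly, by proving that products of $(d-2)$-th powers of distinct linear forms are independent once $d-2$ is large, or by specialization in the spirit of Theorems \ref{points-on-conic} and \ref{points-on-line}: setting $\alpha_i=0$ for the (few) points off the line, respectively conic, collapses $\det\Hess^1(F)$ to $\det\Hess^1(G)$ for the sub-generator $G$ supported on the line (resp. conic), which is nonzero since that sub-configuration has the SLP by codimension-two theory (resp. Theorem \ref{singularConic}). With this low-degree non-vanishing secured for all admissible $\alpha_i$, the Maeno--Watanabe criterion yields the SLP uniformly across the five families.
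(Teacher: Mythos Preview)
Your overall architecture matches the paper's: handle $j=0$ trivially, invoke Theorems \ref{points-on-line} and \ref{points-on-conic} for the Hessians of order $j\ge 2$ (or $j\ge 1$, $j\ge 0$ for the first two families, as you correctly note), and treat $j=1$ separately for the three families with $k=3$. The difference --- and the place where your proposal has a genuine gap --- is precisely the $j=1$ step.

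Neither of your proposed fixes for $j=1$ works. The specialization idea fails outright for the two line families: if you set $\alpha_i=0$ for the off-line points, the remaining $L_i$ span only a $2$-dimensional subspace of $R_1$, so the $3\times 3$ matrix $\Hess^1(F)\big|_{\alpha_{\mathrm{off}}=0}$ (still computed in the basis of the three-dimensional $A_1$) has a zero row and its determinant is identically zero --- there is nothing to recover from that specialization. For the conic family the specialization does give a nonzero polynomial, but that only shows $\det\Hess^1(F)\neq 0$ for \emph{generic} $\alpha$, not for every nonzero choice; and the factorization mechanism underlying Theorems \ref{points-on-conic} and \ref{points-on-line} does not extend to $j=1$, since $h_A(1)=3$ and there are many noncollinear triples $\mathcal I$ that do not contain all the off-line (or off-conic) indices, so no common factor can be pulled from the sum in Lemma \ref{HessLemma}. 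Your alternative fix --- linear independence of the products $\prod_{i\in\mathcal I}L_i^{d-2}$ --- is merely a sufficient condition that you neither prove nor give a plausible argument for, and it is not evident for arbitrary configurations of linear forms.

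The paper closes this gap in one stroke with the classical Gordan--Noether theorem: a homogeneous form in three variables has identically vanishing Hessian if and only if, after a linear change of coordinates, it depends on fewer variables. Since $h_A(1)=3$, the form $F$ genuinely involves all three variables, so $\det\Hess^1(F)\neq 0$ for \emph{every} admissible choice of the $\alpha_i$, and then a generic $\ell$ makes $\det\Hess^1_\ell(F)\neq 0$. This handles $j=1$ uniformly across all five families with no case analysis and no dependence on the specific $\alpha_i$.
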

\begin{proof}
First we note that $\det \Hess^0(F) = F$ and since $F$ is assumed to be non-zero for a generic $\ell$ we have $\det \Hess_\ell^0(F) \neq 0 $. A well known result by P. Gordan and M. Noether \cite{GN} implies that the Hessian of every form in the polynomial ring with three variables is non-zero. Therefore, for a generic linear form $\ell$  we get that $\det \Hess_\ell^1(F)\neq 0$. \par 
\noindent Using Theorems Theorem \ref{points-on-line} and \ref{points-on-conic} for the  first difference vectors given in (\ref{11}) and  (\ref{22}) respectively we conclude that $\det \Hess_\ell^j(F)\neq 0$ for every $2\leq j\leq \lfloor\frac{d}{2}\rfloor $ and a generic linear form $\ell$. This completes the proof.
\end{proof}
\subsection*{Summary} We end the section by summarizing what we have shown.  Let $X=\{P_1, \dots , P_s\}\subseteq \mathbb{P}^n$ and $F=\sum_{i=1}^s\alpha_iL_i^d$, for $d\geq 2\tau(X)$ and $\alpha_i\neq 0$ for every $i$. For $n\geq 2$ if $X\subseteq \mathbb{P}^n$ lies on a rational normal curve then any Artinian Gorenstein  quotient of $A(X)$ with dual generator $F$ satisfies the SLP, Theorem \ref{smoothconic}. This result is more general for $n=2$. In fact, if $X\subseteq \mathbb{P}^2$ lies on a conic (smooth or singular) then in Theorem \ref{singularConic} we prove that any Artinian Gorenstein quotient of  $A(X)$ with dual generator $F$ satisfies the SLP. When $X\subseteq \mathbb{P}^2$, we show in Theorems \ref{points-on-conic} and \ref{points-on-line} that if the first difference of an Artinian Gorenstein quotient of $A(X)$ with dual generator $F$ is equal to $$\Delta h_A = (1,2,h_2-3,\dots , 1_k,\dots ,1_{\tau(X)}),\quad \text{or}\hspace*{2mm}\Delta h_A = (1,2,h_2-3,\dots , 2_k,\dots ,2_{\tau(X)})$$
for some $1\leq k<\tau(X)$, then there is a linear form $\ell$ such that $\det\Hess^j_\ell(F)\neq 0$ for every $k-1\leq j\leq \lfloor\frac{d}{2}\rfloor$. As a consequence of these results we show that any Artinian Gorenstein quotient $A$  of $A(X)$ with with $\Delta h_A$ given in (\ref{11}) and (\ref{22})   satisfies the SLP, Corollary \ref{corSLP}. \par 
We also show in Proposition \ref{tophess-s-1} that for every $n\geq 2$ if the $j$-th Hilbert function of an Artinian Gorenstein quotient of $A(X)$ is equal to $s-1$, that is $h_A(j)=s-1$, then $\det\Hess^j_\ell(F)\neq 0$ for some $\ell$. Also for $X\subseteq \mathbb{P}^2$ in a general linear position we have that $\det\Hess^j_\ell(F)\neq 0$ for some $\ell$ if $h_A(j)=s-2$, Proposition \ref{tophess-s-2}.

\section{Acknowledgment}
The author would like to thank Mats Boij for useful and  insightful comments and discussion that greatly assisted this research. Computations using the algebra software Macaulay2 \cite{13} were essential to get the
ideas behind some of the proofs. This work was supported by the grant VR2013-4545. 
\bibliography{bib.bbl}{}
\bibliographystyle{plain}
\end{document}